\theoremstyle{plain}
\newtheorem{theorem}{Theorem}[section]
\newtheorem{lemma}[theorem]{Lemma}
\newtheorem{proposition}[theorem]{Proposition}
\newtheorem{corollary}[theorem]{Corollary}
\theoremstyle{definition}
\newtheorem{definition}[theorem]{Definition}
\newcommand{\setbuild}[2]{\left\{#1\middle|#2\right\}}
\newcommand{\preorderle}{\preccurlyeq}
\newcommand{\preorderge}{\succcurlyeq}
\newcommand{\asymptoticle}{\precsim}
\newcommand{\asymptoticge}{\succsim}
\newcommand{\aasymptoticle}{\precapprox}
\newcommand{\aasymptoticge}{\succapprox}
\DeclareMathOperator{\ev}{ev}
\title{A generalization of Strassen's spectral theorem}
\author[1,2]{P\'eter Vrana}
\affil[1]{Institute of Mathematics, Budapest University of Technology and Economics, Egry~J\'ozsef u.~1., Budapest, 1111 Hungary.}
\affil[2]{MTA-BME Lend\"ulet Quantum Information Theory Research Group}
\begin{document}
\maketitle
\begin{abstract}
Given a semiring with a preorder subject to certain conditions, the asymptotic spectrum, as introduced by Strassen (J. reine angew. Math. 1988), is a compact Hausdorff space together with a map from the semiring to the ring of continuous functions, which contains all information required to asymptotically compare large powers of the elements.

Compactness of the asymptotic spectrum is closely tied with a boundedness condition assumed in Strassen's work. In this paper we present a generalization that relaxes this condition while still allowing asymptotic comparison via continuous functions on a locally compact Hausdorff space.
\end{abstract}

\section{Introduction}

Motivated by the study of asymptotic tensor rank and more generally relative bilinear complexity, in \cite{strassen1988asymptotic} Strassen developed the theory of asymptotic spectra of preordered semirings (see also \cite[Chapter 2]{zuiddam2018algebraic} for a recent exposition). The spectrum of a (commutative, unital) semiring $S$ with respect to a preorder $\preorderle$ is the set $\Delta(S,\preorderle)$ of $\preorderle$-monotone homomorphisms $S\to\mathbb{R}_{\ge 0}$, i.e. maps $f$ satisfying $f(x+y)=f(x)+f(y)$, $f(xy)=f(x)f(y)$, $f(1)=1$ and $x\preorderle y\implies f(x)\le f(y)$. Strassen introduces the asymptotic preorder $\asymptoticle$ as $x\asymptoticge y$ iff there is a sublinear nonnegative integer sequence $(k_n)_{n\in\mathbb{N}}$ such that for all $n$ the inequality $2^{k_n}x^n\preorderge y^n$ holds (see Section~\ref{sec:asymptoticpreorder} for precise definitions). Every element $f\in\Delta(S,\preorderle)$ satisfies $x\asymptoticge y\implies f(x)\ge f(y)$. The key insight is that under an additional boundedness assumption the converse also holds in the sense that $\left(\forall f\in\Delta(S,\preorderle):f(x)\ge f(y)\right)\implies x\asymptoticge y$ (see the precise statement below).

In the main application in \cite{strassen1988asymptotic} the role of $S$ is played by the set of (equivalence classes of) tensors over a fixed field and of fixed order but arbitrary finite dimension. The operations are the direct sum and the tensor (Kronecker) product of tensors and the preorder is given by tensor restriction (alternatively: degeneration). Asymptotic tensor rank can be characterized in terms of the resulting asymptotic restriction preorder, and as a consequence also in terms of the asymptotic spectrum.

More recently, the theory of asymptotic spectra has been applied to a range of other problems as well. In \cite{zuiddam2019asymptotic} Zuiddam introduced the asymptotic spectrum of graphs and found a dual characterization of the Shannon capacity. Continuing this line of research, Li and Zuiddam \cite{li2018quantum} studied the quantum Shannon capacity and entanglement-assisted quantum capacity of graphs as well as the entanglement-assisted quantum capacity of noncommutative graphs via the asymptotic spectra of suitable semirings of graphs and noncommutative graphs. In the context of quantum information theory, tensors in Hilbert spaces represent entangled pure states, and the relevant preorder is given by local operations and classical communication. This again gives rise to a preordered semiring \cite{jensen2019asymptotic} which refines the tensor restriction preorder and which also fits into Strassen's framework. In this application, the asymptotic spectrum provides a characterization of converse error exponents for asymptotic entantanglement transformations.

At the same time, it became clear that the boundedness condition in Strassen's theorem is too restrictive for some purposes. To motivate the need for a relaxation of this condition, note first that the concept of asymptotic preorder does not make use of the additive structure and in the applications one is mainly interested in the ordered commutative monoid $S\setminus\{0\}$ with multiplication and the preorder. Such objects have been studied in \cite{fritz2017resource} as a mathematical model for resource theories. In this context, asymptotic properties such as the asymptotic preorder above and more generally, rate formulas are of central interest. \cite[Theorem 8.24.]{fritz2017resource} implies that if an ordered commutative monoid has an element $u$ such that for every $x$ there is $k$ such that $u^k\preorderge x$ and $u^kx\preorderge 1$, then $u^{o(n)}x^n\preorderge y^n$ for all large $n$ iff for all monotone homomorphisms $f$ into the semigroup $\mathbb{R}_{\ge0}$ (with multiplication and the usual order) the inequality $f(x)\ge f(y)$ holds. At this point the relation to Strassen's result should be clear: if the monoid in question is the multiplicative monoid of a preordered semiring and the above condition is satisfied with $u=2$, then one can restrict to monotones that preserve both operations. One may wonder if $u=2$ is necessary for this stronger conclusion to hold, but simple examples show that without this assumption the set of monotone semiring homomorphisms can be too small to characterize the asymptotic preorder. In this spirit, Fritz proved a generalization of Strassen's theorem \cite{fritz2018generalization}, listing conditions that are equivalent to $\forall f\in\Delta(S,\preorderle):f(x)\ge f(y)$ and generalize the asymptotic preorder in different ways, emphasizing also the similarity to Positivstellensätze.

Our main result is a sufficient condition under which the spectrum does characterize the asymptotic preorder as above and which generalizes Strassen's condition. Before stating our main theorem, let us recall Strassen's result in a form that eases comparison. We say that an element $u\in S$ is power universal \cite[Definition 2.7.]{fritz2018generalization} if for every $x\in S\setminus\{0\}$ there is a $k\in\mathbb{N}$ such that $u^kx\preorderge 1$ and $u^k\preorderge x$. If $S$ contains a power universal element than $S$ is said to be of polynomial growth. The asymptotic preorder is defined as $x\asymptoticge y$ iff there is a sublinear nonnegative integer sequence $(k_n)_{n\in\mathbb{N}}$ such that for all $n$ the inequality $u^{k_n}x^n\preorderge y^n$ holds. Note that the asymptotic preorder does not depend on the choice of the power universal element (see Lemma~\ref{lem:changepoweruniversal} below). With these definitions Strassen's theorem on asymptotic spectra can be stated as follows.
\begin{theorem}[Strassen {\cite[Corollary 2.6.]{strassen1988asymptotic}}, Zuiddam {\cite[Theorem 2.12]{zuiddam2018algebraic}}]\label{thm:Strassen}
Let $(S,\preorderle)$ be a preordered semiring such that the canonical map $\mathbb{N}\hookrightarrow S$ is an order embedding, and suppose that $u=2$ is power universal. Then for every $x,y\in S$ we have
\begin{equation}
x\asymptoticge y\iff\forall f\in\Delta(S,\preorderle):f(x)\ge f(y).
\end{equation}

$\Delta(S,\preorderle)$ is a nonempty compact Hausdorff space.
\end{theorem}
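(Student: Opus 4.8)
The plan is to establish the easy implication and the topological assertions by routine arguments and to devote the real effort to the nontrivial implication, which will also yield nonemptiness. (Throughout I assume $x,y\ne0$; the cases involving $0$ are immediate from the axioms.) For ``$\Rightarrow$'': if $x\asymptoticge y$ via a sublinear sequence $(k_n)$ with $2^{k_n}x^n\preorderge y^n$, then applying any $f\in\Delta(S,\preorderle)$ gives $2^{k_n}f(x)^n\ge f(y)^n$, and taking $n$-th roots and letting $n\to\infty$ (so $2^{k_n/n}\to1$) yields $f(x)\ge f(y)$; the case $f(x)=0$ is immediate. For the topology I would view $\Delta(S,\preorderle)$ as a subspace of $\mathbb{R}^{S}$ with the product topology, which is Hausdorff; each defining condition (additivity, multiplicativity, $f(1)=1$, positivity, and $a\preorderle b\Rightarrow f(a)\le f(b)$) is closed, so $\Delta(S,\preorderle)$ is closed in $\mathbb{R}^{S}$, while power universality of $u=2$ bounds the coordinates: choosing for each $a\ne0$ an integer $k(a)$ with $2^{k(a)}\preorderge a$ and $2^{k(a)}a\preorderge1$, and using $f(2)=2$ (from $f(1)=1$, additivity and $\mathbb{N}\hookrightarrow S$), one gets $f(a)\in[2^{-k(a)},2^{k(a)}]$ and $f(0)=0$. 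Thus $\Delta(S,\preorderle)$ is closed in the compact product $\{0\}\times\prod_{a\ne0}[2^{-k(a)},2^{k(a)}]$, hence compact by Tychonoff.

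For ``$\Leftarrow$'' I would argue the contrapositive: given $x\not\asymptoticge y$, produce $f\in\Delta(S,\preorderle)$ with $f(x)<f(y)$. Applied to $x=2$, $y=4$ (for which $2\not\asymptoticge4$, since $\mathbb{N}\hookrightarrow S$ is an order embedding), this also exhibits a point of $\Delta(S,\preorderle)$, so the spectrum is nonempty. The method is a Positivstellensatz-style extension argument. First I would reformulate the hypothesis quantitatively: with $K(n)=\min\{k:2^{k}x^{n}\preorderge y^{n}\}$, which is finite by power universality and subadditive in $n$, the limit $c=\lim_{n}K(n)/n=\inf_{n}K(n)/n$ exists, and a short interpolation argument identifies $x\asymptoticge y$ with $c=0$; so $c>0$, and I may fix a rational $\varepsilon\in(0,c)$, for which $y^{n}\not\preorderle 2^{\lfloor\varepsilon n\rfloor}x^{n}$ holds for every $n$. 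Then I would apply Zorn's lemma to the set of preorders $\preceq'$ on $S$ that refine $\preorderle$, keep $(S,\preceq')$ a preordered semiring for which $\mathbb{N}\hookrightarrow S$ is still an order embedding, and still satisfy $y^{n}\not\preceq'2^{\lfloor\varepsilon n\rfloor}x^{n}$ for all $n$. Freezing $\varepsilon$ is what makes this last condition stable under unions of chains — a single violating relation would already occur at one level — so a maximal such $\preceq^{*}$ exists, and it still has $x\not\asymptoticge^{*}y$ (for the associated asymptotic preorder $\asymptoticge^{*}$) with $u=2$ power universal.

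The heart of the argument — and, I expect, the main obstacle — is then to show that maximality forces $\asymptoticge^{*}$ to be a total preorder and that a preordered semiring of polynomial growth whose asymptotic preorder is total carries a canonical point in its spectrum. For totality: if some pair $a,b$ satisfied neither $a\asymptoticge^{*}b$ nor $b\asymptoticge^{*}a$, I would use the standard description of the preorder generated by $\preceq^{*}$ together with one further comparison to check that at least one of the two extensions ``add $b\preceq a$'' or ``add $a\preceq b$'' still leaves $y^{n}$ not $\preceq$-below $2^{\lfloor\varepsilon n\rfloor}x^{n}$ for every $n$, contradicting maximality. For the canonical point, I would set $f(a)=\lim_{n}\tfrac{1}{n}\log_{2}R^{*}(a^{n})$, where $R^{*}(c)=\min\{m\in\mathbb{N}:c\preceq^{*}m\}$ is finite by power universality and submultiplicative along powers, so that the limit exists, and normalise so that $f(1)=1$ and $f(0)=0$; totality of $\asymptoticge^{*}$ is exactly what promotes the evident one-sided estimates into the identities $f(a+b)=f(a)+f(b)$ and $f(ab)=f(a)f(b)$ and into the equivalence $a\asymptoticge^{*}b\iff f(a)\ge f(b)$ (this last also using that the ``rank'' $R^{*}$ and the corresponding ``subrank'' have equal asymptotic rates under totality). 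Since $\preceq^{*}$ refines $\preorderle$, this $f$ lies in $\Delta(S,\preorderle)$, and from $x\not\asymptoticge^{*}y$ and totality we obtain $y\asymptoticge^{*}x$ and $\neg(x\asymptoticge^{*}y)$, hence $f(x)<f(y)$. Manufacturing a genuine real number out of the power-universal element, and extracting from it an honest semiring homomorphism, is the step that carries the weight; the rest is bookkeeping.
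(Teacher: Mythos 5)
First, note that this paper does not prove Theorem~\ref{thm:Strassen} at all: it is imported by citation from Strassen and Zuiddam, and the paper's own lemmas (in particular Lemma~\ref{lem:Rpreorderspectrum} and Proposition~\ref{prop:surjective}) only echo pieces of that classical proof. Your outline is in fact the classical architecture: the easy direction by taking $n$-th roots, compactness by exhibiting $\Delta(S,\preorderle)$ as a closed subset of a product of intervals, and for the converse a Zorn extension of the preorder subject to a frozen quantitative invariant ($y^n\not\preorderle' 2^{\lfloor\varepsilon n\rfloor}x^n$, with $\varepsilon$ fixed below the Fekete rate $c$), maximality forcing totality, and a canonical spectral point for total preorders built from asymptotic comparison with $\mathbb{N}$. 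Those parts of your write-up that you actually carry out (easy direction, compactness, the reformulation via $K(n)$ and Fekete, the chain-stability of the invariant, nonemptiness via $2\not\asymptoticge 4$) are correct.

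The two steps you yourself flag as the heart are, however, genuinely missing, and they are not ``bookkeeping''. (a) ``Maximality forces totality'' is not a routine check: if both extensions $\preorderle^*_{(a,b)}$ and $\preorderle^*_{(b,a)}$ violate an invariant, unfolding the generated preorder (as in Definition~\ref{def:Rpreorderdef}) gives inequalities of the form $c_1+sb\preorderle^* d_1+sa$ and $c_2+ta\preorderle^* d_2+tb$ with ``bad'' pairs $(c_i,d_i)$, and no admissible cancellation turns these directly into a violation for $\preorderle^*$ itself (adding or multiplying them leaves common terms $st(a+b)$ or $stab$ that a preordered semiring does not let you cancel). The known proofs need a real lemma here, proved by iterating the adjoined relation through telescoping sums of the type $\sum_m x^m y^{n-m}$ --- exactly the computation this paper redoes in Lemma~\ref{lem:Rpreorderspectrum} --- so asserting ``one further comparison'' does not discharge it. (b) The canonical point: as written, $f(a)=\lim_n\frac1n\log_2 R^*(a^n)$ cannot be a semiring homomorphism (it sends $1\mapsto 0$ and is additive, not multiplicative, under products); no ``normalisation'' fixes this --- you need $f(a)=\lim_n R^*(a^n)^{1/n}$. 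Moreover the identities $f(a+b)=f(a)+f(b)$ and $f(ab)=f(a)f(b)$ do not follow from totality plus ``evident one-sided estimates'': the hard inequalities require superadditivity and supermultiplicativity of the asymptotic subrank $Q^*(a)=\max\{m\in\mathbb{N}:m\preorderle^* a\}$ (the additive case needs the largest-binomial-term estimate applied to $(a+b)^n\preorderge\binom{n}{k}a^kb^{n-k}$), together with the equality of the rank and subrank rates, which uses totality and the $\mathbb{N}$-order-embedding; and the strict separation $f(x)<f(y)$ needs the $\varepsilon$-gap run through $f$ (giving $f(y)\ge 2^{\varepsilon}f(x)$ with $f(x)>0$), not just monotonicity plus $\neg(x\asymptoticge^* y)$. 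So: right strategy --- the same as the cited proof --- but the two load-bearing lemmas are stated, not proved, and one of them is mis-stated.
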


Our main result the following (for $s\in S$, $\ev_s:\Delta(S,\preorderle)$ denotes the evaluation map $f\mapsto f(s)$).
\begin{theorem}\label{thm:main}
Let $(S,\preorderle)$ be a preordered semiring of polynomial growth such that the canonical map $\mathbb{N}\hookrightarrow S$ is an order embedding, and let $M\subseteq S$ and $S_0$ the subsemiring generated by $M$. Suppose that
\begin{enumerate}[({M}1)]
\item\label{it:invertibleuptobounded} for all $s\in S\setminus\{0\}$ there exist $m\in M$ and $n\in\mathbb{N}$ such that $1\preorderle nms$ and $ms\preorderle n$
\item\label{it:boundedev} for all $m\in M$ such that $\ev_m:\Delta(S_0,\preorderle)\to\mathbb{R}_{\ge 0}$ is bounded there is an $n\in\mathbb{N}$ such that $m\preorderle n$.
\end{enumerate}
Then for every $x,y\in S$ we have
\begin{equation}
x\asymptoticge y\iff\forall f\in\Delta(S,\preorderle):f(x)\ge f(y).
\end{equation}

$\Delta(S,\preorderle)$ is a nonempty locally compact Hausdorff space and if $u$ is power universal then $\ev_u:\Delta(S,\preorderle)\to\mathbb{R}_{\ge 0}$ is proper.
\end{theorem}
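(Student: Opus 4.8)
The plan is to settle the topological assertions directly, to observe that the forward implication is elementary, and to obtain the reverse implication (together with nonemptiness) by constructing the separating homomorphism first on a suitable bounded subsemiring, where Strassen's theorem applies, and then extending it using hypotheses (M1) and (M2).

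\emph{Topology.} First I would give $\Delta:=\Delta(S,\preorderle)$ the topology of pointwise convergence, viewing it as a subspace of $\mathbb{R}_{\ge0}^{S}$, so that it is Hausdorff and closed in $\mathbb{R}_{\ge0}^{S}$ (the conditions defining a monotone homomorphism — additivity, multiplicativity, $f(1)=1$, monotonicity — are closed). Fix a power universal $u$. Evaluating $f\in\Delta$ on $u^{k}\preorderge1$ gives $f(u)\ge1$, and for each $s\in S\setminus\{0\}$ power universality provides $k(s)\in\mathbb{N}$ with $u^{k(s)}\preorderge s$ and $u^{k(s)}s\preorderge1$, hence $f(u)^{-k(s)}\le f(s)\le f(u)^{k(s)}$. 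Therefore, for $R\ge1$, the set $\Delta_{\le R}:=\setbuild{f\in\Delta}{f(u)\le R}$ is contained in the compact product $\{0\}\times\prod_{s\ne0}[R^{-k(s)},R^{k(s)}]$ and is closed in it (being $\Delta$, closed in $\mathbb{R}_{\ge0}^{S}$, intersected with the closed condition $f(u)\le R$), so it is compact. Since $\Delta=\bigcup_{R\in\mathbb{N}}\Delta_{\le R}$ and every $f$ has the open neighbourhood $\setbuild{g\in\Delta}{g(u)<f(u)+1}$ whose closure lies inside the compact $\Delta_{\le f(u)+1}$, the space $\Delta$ is locally compact; and for compact $K\subseteq\mathbb{R}_{\ge0}$, choosing $R$ with $K\subseteq[0,R]$ exhibits $\ev_u^{-1}(K)$ as a closed subset of the compact $\Delta_{\le R}$, hence compact, so $\ev_u$ is proper. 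The same reasoning applies to any power universal element.

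\emph{Forward implication.} If $x\asymptoticge y$ via a sublinear witness $(k_n)$, then applying $f\in\Delta$ to $u^{k_n}x^{n}\preorderge y^{n}$ and taking $n$th roots gives $f(u)^{k_n/n}f(x)\ge f(y)$; since $f(u)\ge1$ and $k_n/n\to0$, passing to the limit yields $f(x)\ge f(y)$.

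\emph{Reverse implication.} It suffices to show that $x\not\asymptoticge y$ produces $f\in\Delta$ with $f(x)<f(y)$ — in fact with $f(y)>f(u)^{\varepsilon}f(x)$ for the $\varepsilon>0$ witnessing $x\not\asymptoticge y$ — and nonemptiness of $\Delta$ will come out of a degenerate instance of the construction; the cases $x=0$ or $y=0$ reduce quickly to nonemptiness, so I assume $x,y\ne0$. The construction adapts Strassen's argument. Let $S_{1}=\{0\}\cup\setbuild{s\in S}{\exists n\in\mathbb{N}:\ 1\preorderle ns\text{ and }s\preorderle n}$; one checks this is a subsemiring, that $\mathbb{N}\hookrightarrow S_{1}$ is an order embedding, and that $2$ is power universal in $S_{1}$ (if $s\ne0$ is sandwiched by $n$ and $2^{k}\preorderge n$ then $2^{k}\preorderge s$ and $2^{k}s\preorderge ns\preorderge1$), so Theorem~\ref{thm:Strassen} gives a nonempty compact Hausdorff $\Delta(S_{1},\preorderle)$ characterizing the asymptotic preorder on $S_{1}$. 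By (M1), every nonzero $s\in S$ becomes sandwiched after multiplication by a suitable $m\in M\subseteq S_0$, so a monotone homomorphism on $S$ is determined by its restriction to $S_1$ together with its values on the $m\in M$; this is what allows one to extend from $S_1$ to $S_0$ and then to $S$. Condition (M2) controls the extension: for $m\in M$ with $\ev_m$ bounded on $\Delta(S_0,\preorderle)$ we get $m\preorderle n$, so $m\in S_1$ and $f(m)$ is already fixed; for the remaining $m$, $\ev_m$ is genuinely unbounded on $\Delta(S_0,\preorderle)$, leaving room to choose $f(m)$. I would carry out the extension by a maximality argument on asymptotic preorders refining $\preorderle$ on $S$, à la Strassen, the point being that any such maximal preorder restricts on the sandwiched part to one covered by Theorem~\ref{thm:Strassen}, while (M2) keeps the construction from diverging along the unbounded directions. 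The resulting $f$ automatically lands in some $\Delta_{\le R}$ by the power-universality bounds.

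\emph{Main obstacle.} The hard part is this extension/maximality step: in Strassen's setting compactness of the spectrum makes it essentially automatic, whereas here it must be driven by (M1) and (M2) in tandem — (M1) so that every element is sandwiched after multiplying by some $m\in M$, hence accessible to the compact theory, and (M2) so that the only obstruction to compactness, an unbounded $m\in M$, is faithfully registered by an unbounded $\ev_m$ on $\Delta(S_0,\preorderle)$, giving the construction enough slack to close. Everything else — the topology, the forward implication, and the invocation of Theorem~\ref{thm:Strassen} on $S_1$ — is routine once this core construction is established.
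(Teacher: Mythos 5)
Your topology section and the forward implication are correct and essentially coincide with the paper's Lemma~\ref{lem:spectrumproperties}, Proposition~\ref{prop:polygrowthlocallycompact} and Lemma~\ref{lem:spectrumofasymptoticpreorder}, and your $S_1$ is exactly the semiring $S_b=S_+\cap S_-$ on which Theorem~\ref{thm:Strassen} applies. But the reverse implication, which is the actual content of the theorem, is not proved: you yourself label the extension step the ``main obstacle'' and offer only a gesture at ``a maximality argument on asymptotic preorders refining $\preorderle$,'' together with the claim that for unbounded $m\in M$ condition (M2) ``leaves room to choose $f(m)$.'' That is not an argument. The values of a would-be spectral point on $M$ are not free parameters --- they are constrained by additivity, multiplicativity and monotonicity against all of $S$ simultaneously --- and you give no construction showing a consistent choice exists, no explanation of how (M1) and (M2) actually force the construction to close, and no argument for nonemptiness of $\Delta(S,\preorderle)$ beyond the assertion that it comes from ``a degenerate instance.'' So the core of the theorem is missing, and the sketched route (build a separating $f$ with $f(x)<f(y)$ by hand) is precisely the hard problem restated.

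For comparison, the paper never constructs a separating functional directly. Its mechanism is: (i) an extension/surjectivity theorem (Proposition~\ref{prop:surjective}), proved via the relaxed preorder $\preorderle_f$ of Definition~\ref{def:Rpreorderdef} and the telescoping estimate of Lemma~\ref{lem:Rpreorderspectrum}, showing every point of $\Delta(S_0,\preorderle)$ extends to $S$; this is used only to transfer boundedness of $\ev_{m_2}$ from $\Delta(S,\preorderle)$ to $\Delta(S_0,\preorderle)$ so that (M2) yields $m_2\in S_-$, hence $ty\in S_-$; (ii) the identification $\Delta(S,\preorderle)\subseteq\Delta(S_-,\preorderle)=\Delta(S_b,\preorderle)$ together with the criterion that $f\in\Delta(S_-,\preorderle)$ extends to $S$ iff $f(\bar u)\neq 0$ (Lemmas~\ref{lem:plusminusproperties} and~\ref{lem:extensionfromminus}); and (iii), assuming $\forall f\in\Delta(S,\preorderle):f(x)\ge f(y)$, a verification that \emph{every} $f\in\Delta(S_b,\preorderle)$ --- including the non-extendable ones, which are exactly those with $f(\bar u)=0$ and are neutralized by the $+1$ term, the prefactor $\bar u$, and the scaling $\lceil\delta^{-n}\rceil$ with $\delta=\min_{f}f(tx)$ --- satisfies $f\bigl((k+1)\lceil\delta^{-n}\rceil(tx)^n\bigr)\ge f\bigl(\bar u\lceil\delta^{-n}\rceil(ty)^n+1\bigr)$, so that Theorem~\ref{thm:Strassen} applied inside $S_b$ gives the asymptotic inequality, after which the auxiliary factors are removed by \ref{it:asymptoticcancellation} and \ref{it:asymptoticsmallfactors} of Lemma~\ref{lem:asymptoticproperties} together with Lemma~\ref{lem:aasymptotic}. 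None of these ingredients appears in your proposal; until the analogue of (i)--(iii) (or a genuinely different working construction) is supplied, the proof has a gap exactly where the theorem's difficulty lies.
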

This result is a generalization of Theorem~\ref{thm:Strassen}: if $u=2$ is power universal then one can choose $M=\{1\}$ for which both conditions are easily verified, whereas the topological part follows from the fact that $\ev_u$ is a constant map in this case.

It may happen that \ref{it:invertibleuptobounded} is not satisfied by any subset $M$ of $S$ but can be satisfied after localizing at a suitable multiplicative set $T$. We will see that localization does not affect the asymptotic preorder, which leads to the following corollary, a somewhat more flexible version of our main result:
\begin{corollary}\label{cor:main}
Let $(S,\preorderle)$ be a preordered semiring of polynomial growth such that $\mathbb{N}\hookrightarrow S$ is an order embedding. Let $M\subseteq S$ and $T\subseteq S\setminus\{0\}$ be a multiplicative set containing $1$, and let $S_0$ be the subsemiring generated by $M\cup T$. Suppose that
\begin{enumerate}[({M}1')]
\item\label{it:fracinvertibleuptobounded} for all $s\in S\setminus\{0\}$ there exist $m\in M$, $t_1,t_2\in T$ and $n\in\mathbb{N}$ such that $t_2\preorderle n mt_1s$ and $mt_1s\preorderle n t_2$
\item\label{it:fracboundedev} for all $m\in M$ and $t_1,t_2\in T$ such that $\ev_m\frac{\ev_{t_1}}{\ev_{t_2}}:\Delta(S_0,\preorderle)\to\mathbb{R}_{\ge 0}$ is bounded there is an $n\in\mathbb{N}$ such that $mt_1\preorderle n t_2$. Then for every $x,y\in S$ we have
\begin{equation}\label{eq:maincor}
x\asymptoticge y\iff\forall f\in\Delta(S,\preorderle):f(x)\ge f(y).
\end{equation}
\end{enumerate}
\end{corollary}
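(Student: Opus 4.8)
The plan is to reduce the corollary to Theorem~\ref{thm:main} by passing to a localization of $S$ at the multiplicative set $T$. The first step is to construct the localized preordered semiring $(T^{-1}S,\preorderle)$: its elements are formal fractions $s/t$ with $s\in S$, $t\in T$, with the obvious semiring operations, and $s_1/t_1\preorderle s_2/t_2$ is declared to hold iff there is some $t\in T$ with $t\,t_2\,s_1\preorderle t\,t_1\,s_2$ in $S$ (the extra $t$ is needed for transitivity, exactly as in ring localization). One checks this is a well-defined preorder compatible with the semiring structure, that the canonical map $\iota\colon S\to T^{-1}S$, $s\mapsto s/1$, is a monotone homomorphism, and — crucially — that $\iota$ reflects the order on nonzero elements, i.e. $\iota(x)\preorderle\iota(y)$ in $T^{-1}S$ iff $t x\preorderle t y$ in $S$ for some $t\in T$. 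This last point, together with the fact that $T\subseteq S\setminus\{0\}$, is what guarantees $\mathbb{N}\hookrightarrow T^{-1}S$ is still an order embedding and that $T^{-1}S$ still has polynomial growth (a power universal $u\in S$ remains power universal in $T^{-1}S$, since the defining inequalities only get easier and one uses (M1') to handle the new denominators).

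The second step is the statement alluded to in the text before the corollary: \emph{localization does not affect the asymptotic preorder}. Concretely, I would prove that for $x,y\in S$ one has $x\asymptoticge y$ in $(S,\preorderle)$ iff $\iota(x)\asymptoticge\iota(y)$ in $(T^{-1}S,\preorderle)$. The forward direction is immediate since $\iota$ is a monotone homomorphism. For the converse, suppose $v^{k_n}\iota(x)^n\preorderge\iota(y)^n$ with $(k_n)$ sublinear, where $v=\iota(u)$; unwinding the definition of the localized order gives, for each $n$, some $t^{(n)}\in T$ with $t^{(n)}u^{k_n}x^n\preorderge t^{(n)}y^n$ in $S$. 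Now apply (M1') to $t^{(n)}$ (or rather, bound $t^{(n)}$ from below by $1$ up to a power universal factor): there is $m\in M$ and $\ell_n\in\mathbb{N}$ with $u^{\ell_n}\preorderge m t^{(n)}$-type inequalities letting us replace $t^{(n)}$ by a bounded-exponent power of $u$. The point is that the "cost" incurred is $u^{O(1)}$ per copy, hence after taking $n$-th powers and using submultiplicativity of these costs it stays sublinear in $n$. Care is needed here: one must choose the bounding exponents uniformly, which is where condition (M1') applied to a single auxiliary element (not an $n$-dependent family) does the work — I would first absorb all $t^{(n)}$ into finitely many generators by a compactness/pigeonhole-free direct argument using that $T$ is generated as a monoid.

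The third step assembles everything: $(T^{-1}S,\preorderle)$ together with $M':=\iota(M)$ satisfies the hypotheses (M1) and (M2) of Theorem~\ref{thm:main}, where (M1') becomes (M1) because in $T^{-1}S$ the elements $t_1,t_2$ are invertible so $mt_1s\preorderle nt_2$ reads $\iota(m)\iota(s)\preorderle n\,\iota(t_2)/\iota(t_1)$ and one rescales; and (M2') becomes (M2) after identifying $\Delta(S_0,\preorderle)$ with $\Delta$ of the corresponding subsemiring of $T^{-1}S$ via restriction of spectral points (monotone homomorphisms $T^{-1}S\to\mathbb{R}_{\ge0}$ are in bijection with those on $S_0$ sending every element of $T$ to a nonzero value, and (M1') forces $\ev_t\neq 0$ on the whole spectrum). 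Theorem~\ref{thm:main} then gives $\iota(x)\asymptoticge\iota(y)\iff\forall g\in\Delta(T^{-1}S,\preorderle)\colon g(\iota(x))\ge g(\iota(y))$. Finally, precomposition with $\iota$ gives a bijection $\Delta(T^{-1}S,\preorderle)\xrightarrow{\sim}\Delta(S,\preorderle)$ — surjectivity because any $f\in\Delta(S,\preorderle)$ automatically has $f(t)\neq 0$ for $t\in T$ by (M1'), hence extends uniquely to the localization — so the right-hand side matches $\forall f\in\Delta(S,\preorderle)\colon f(x)\ge f(y)$, and combined with Step 2 this is exactly \eqref{eq:maincor}.

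The main obstacle I anticipate is Step 2, specifically controlling the denominators $t^{(n)}\in T$ uniformly in $n$ so that the sublinearity of the exponent sequence is preserved. The naive bound could a priori grow with $n$; the resolution is to show the denominators can be taken from a fixed finite set (or a fixed single generator, using that $T$ is a finitely-interacting multiplicative set in the relevant inequalities) and then to invoke (M1') once, not $n$ times, trading each $t^{(n)}$ for $u^{c}$ with $c$ independent of $n$. Getting this bookkeeping right — and checking that the asymptotic preorder genuinely is insensitive to localization, including the edge cases where some $x$ or $y$ is $0$ — is the technical heart of the argument; the localization construction itself and the spectral bijection are routine.
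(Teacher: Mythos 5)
Your overall route is the same as the paper's: localize at $T$, take $M'=\setbuild{\frac{mt_1}{t_2}}{m\in M,\ t_1,t_2\in T}$, check that \ref{it:fracinvertibleuptobounded} and \ref{it:fracboundedev} turn into the two conditions of Theorem~\ref{thm:main} for $M'\subseteq T^{-1}S$, identify $\Delta(T^{-1}S,\preorderle)$ with $\Delta(S,\preorderle)$ via the canonical map (every spectral point is automatically nonzero on $T$, as in Lemma~\ref{lem:spectrumlocalization}), apply Theorem~\ref{thm:main}, and transport the conclusion back using the invariance of the asymptotic preorder under localization. The transfer of the two conditions and the spectral bijection are essentially as in the paper (Lemmas~\ref{lem:spectrumofasymptoticpreorder} and~\ref{lem:spectrumlocalization}); the only cosmetic difference is that the paper equips $T^{-1}S$ with its asymptotic preorder before invoking Theorem~\ref{thm:main}, while you keep the localized preorder.

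The genuine problem is your Step 2, which you yourself flag as the technical heart. From $\left(\frac{u}{1}\right)^{k_n}\left(\frac{x}{1}\right)^{n}\preorderge\left(\frac{y}{1}\right)^{n}$ you unwind the localized preorder to get $t^{(n)}u^{k_n}x^n\preorderge t^{(n)}y^n$ with $n$-dependent $t^{(n)}\in T$, and then propose to control these denominators uniformly: take them from a fixed finite set, or trade each $t^{(n)}$ for $u^{c}$ with $c$ independent of $n$ via \ref{it:fracinvertibleuptobounded}. Neither move is available: $T$ is an arbitrary multiplicative subset (no finite generation is assumed, and nothing in the definition of the localized preorder forces the witnesses $t^{(n)}$ into a fixed finite set), and power universality only gives $t^{(n)}\preorderle u^{c_n}$ and $u^{c_n}t^{(n)}\preorderge 1$ with $c_n$ depending on $t^{(n)}$, hence possibly growing much faster than $n$; \ref{it:fracinvertibleuptobounded} supplies no uniform exponent either. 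So the step fails as written. The missing idea --- and the paper's actual mechanism, Lemma~\ref{lem:localizationasymptotic} --- is that no uniformity is needed: by \ref{it:asymptoticcancellation} of Lemma~\ref{lem:asymptoticproperties} a single nonzero factor can be cancelled at the price of weakening $\preorderge$ to $\asymptoticge$, so for each fixed $n$ one gets $u^{k_n}x^n\asymptoticge y^n$, i.e.\ $x\aasymptoticge y$, and then Lemma~\ref{lem:aasymptotic} ($\aasymptoticle=\asymptoticle$), whose diagonal choice of inner exponents absorbs the non-uniform costs, yields $x\asymptoticge y$. Since the paper already proves Lemma~\ref{lem:localizationasymptotic}, the clean fix is simply to cite it (and to use \ref{it:asymptoticcancellation} together with Lemma~\ref{lem:aasymptotic} wherever you cancel elements of $T$); the same remark applies to your claim in Step 1 that order reflection gives the order embedding of $\mathbb{N}$ and the power universality of $u$ in $T^{-1}S$, where cancelling a $t\in T$ again only yields asymptotic information. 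With those repairs the rest of your argument goes through and coincides with the paper's proof.
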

In particular, choosing $M=\{1\}$ and $T=S\setminus\{0\}$ (effectively repacing $S$ with its semifield of fractions) guarantees \ref{it:fracinvertibleuptobounded}, but then verifying \ref{it:fracboundedev} (which is also a necessary condition in this case) may require a fairly detailed knowledge of $\Delta(S,\preorderle)$. In practice it is desirable to choose $M$ and $T$ in such a way that $S_0$ is as simple as possible. Note however that having a complete classification of the elements of $\Delta(S_0,\preorderle)$ is not a prerequisite for verifying \ref{it:fracboundedev}.

Under the conditions of Theorem~\ref{thm:Strassen}, the asymptotic spectrum has a certain uniqueness (or minimality) property \cite[Corollary 2.7.]{strassen1988asymptotic}. To state the version that is true in the more general setting of Theorem~\ref{thm:main}, note first that the map $s\mapsto\ev_s$ is a semiring homomorphism into $C(\Delta(S,\preorderle))$, the space of continuous functions on $\Delta(S,\preorderle)$, monotone with respect to the pointwise partial order. Given a preordered semiring $(S,\preorderle)$ of polynomial growth, let us call a pair $(X,\Phi)$ an abstract asymptotic spectrum for $(S,\preorderle)$ if $X$ is a locally compact topological space, $\Phi:S\to C(X)$ a semiring homomorphism such that $\Phi(S)$ separates the points of $X$, the image of every power universal element is a proper map, and $\forall x,y\in S:x\asymptoticge y\iff\Phi(x)\ge\Phi(y)$.
\begin{proposition}\label{prop:uniqueness}
Let $(X,\Phi)$ be an abstract asymptotic spectrum for $(S,\preorderle)$. Then there is a unique homeomorphism $h:X\to\Delta(S,\preorderle)$ such that $\forall s\in S:\Phi(s)=\ev_s\circ h$.
\end{proposition}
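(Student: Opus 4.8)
The map we will use is the evident one: define $h$ on $X$ by $h(p)(s)=\Phi(s)(p)$. The plan is to check in turn that $h$ lands in $\Delta(S,\preorderle)$, is continuous and injective, is surjective, and is open; uniqueness will then be automatic. Since $\Phi$ is a semiring homomorphism into $C(X)$ and evaluation at $p$ is a semiring homomorphism $C(X)\to\mathbb{R}$, each $h(p)$ is a semiring homomorphism $S\to\mathbb{R}$ with $h(p)(1)=\Phi(1)(p)=1$; its values are nonnegative because $0\preorderle s$ forces $0=\Phi(0)\le\Phi(s)$; and it is $\preorderle$-monotone because $x\preorderle y$ implies $x\asymptoticge y$ (take $k_n\equiv0$), hence $\Phi(y)\ge\Phi(x)$ by the defining property of an abstract asymptotic spectrum. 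Thus $h(p)\in\Delta(S,\preorderle)$. As $\Delta(S,\preorderle)$ carries the topology of pointwise convergence, $h$ is continuous because each $p\mapsto h(p)(s)=\Phi(s)(p)$ is; and $h$ is injective precisely because $\Phi(S)$ separates points of $X$ (which, incidentally, forces $X$ to be Hausdorff). Finally, any $h'$ with $\Phi(s)=\ev_s\circ h'$ for all $s$ satisfies $h'(p)(s)=\Phi(s)(p)=h(p)(s)$, so $h'=h$; hence $h$ is the unique map with the stated property, and it remains only to see that it is a homeomorphism, i.e. bijective and open.

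The crux is surjectivity. Fix a power universal element $u$ (one exists since $S$ is of polynomial growth); by hypothesis $\Phi(u)$ is a proper map, and recall that every $f\in\Delta(S,\preorderle)$ is monotone for $\asymptoticle$. Let $f_0\in\Delta(S,\preorderle)$, put $c=f_0(u)+1$, and let $K=\Phi(u)^{-1}([0,c])$, which is \emph{compact} by properness of $\Phi(u)$ — this is exactly where the boundedness-type hypothesis of the classical theorem gets replaced. For finite $F\subseteq S$ set $C_F=\setbuild{p\in K}{\Phi(s)(p)=f_0(s)\text{ for all }s\in F}$, a closed subset of $K$. Since $C_{F_1\cup F_2}\subseteq C_{F_1}\cap C_{F_2}$, if every $C_F$ is nonempty then the $C_F$ have the finite intersection property and, $K$ being compact, $\bigcap_FC_F\neq\emptyset$; any $p$ in this intersection satisfies $h(p)=f_0$. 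So suppose $C_F=\emptyset$ for some finite $F$, which we may take to contain $u$. Writing $\Phi_F\colon X\to\mathbb{R}^F$, $p\mapsto(\Phi(s)(p))_{s\in F}$, the point $v_0=(f_0(s))_{s\in F}$ then lies outside the compact set $\Phi_F(K)$, so there is $\delta\in(0,1]$ with $\lVert v_0-w\rVert\ge\delta$ for all $w\in\Phi_F(K)$.

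Now choose $v_0'\in\mathbb{Q}_{\ge0}^F$ with $\lVert v_0-v_0'\rVert<\delta/4$ and a rational $\epsilon$ with $\lVert v_0-v_0'\rVert<\epsilon<\delta/2$, and consider the rational polynomial $G(w)=\sum_{s\in F}(w_s-(v_0')_s)^2-\epsilon^2$. Its negativity locus is the open ball $B(v_0',\epsilon)$, which misses $\Phi_F(K)$ (points of $\Phi_F(K)$ are at distance $\ge 3\delta/4$ from $v_0'$) and is contained in $\{w:w_u<c\}$ (for $w\in B(v_0',\epsilon)$ one has $w_u<f_0(u)+3\delta/4<c$). Because $\Phi_F(X)\cap\{w:w_u\le c\}=\Phi_F(K)$ — the condition $\Phi(u)(p)\le c$ being equivalent to $p\in K$ — the ball $B(v_0',\epsilon)$ in fact misses all of $\Phi_F(X)$, so $G(\Phi_F(p))\ge0$ for every $p\in X$, while $G(v_0)=\lVert v_0-v_0'\rVert^2-\epsilon^2<0$. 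Expanding $G$, using $\Phi(s)\Phi(s')=\Phi(ss')$, clearing a common denominator $N$, and collecting the positive- and negative-coefficient monomials, one obtains $c_1,c_2\in S$ with $N\,G(\Phi_F(p))=\Phi(c_1)(p)-\Phi(c_2)(p)$ for all $p$ and $N\,G(v_0)=f_0(c_1)-f_0(c_2)$. Hence $\Phi(c_1)\ge\Phi(c_2)$ pointwise on $X$, so $c_1\asymptoticge c_2$ by the defining property of an abstract asymptotic spectrum, so $f_0(c_1)\ge f_0(c_2)$ because $f_0\in\Delta(S,\preorderle)$ — contradicting $N\,G(v_0)<0$. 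Therefore every $C_F$ is nonempty and $h$ is surjective.

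It remains to see $h$ is open, which then completes the proof that it is a homeomorphism. For each $c$, the restriction of $h$ to the compact set $\Phi(u)^{-1}([0,c])$ is a continuous injection into the Hausdorff space $\Delta(S,\preorderle)$, hence a homeomorphism onto its image, which by surjectivity of $h$ equals $\ev_u^{-1}([0,c])$. Since $X=\bigcup_c\Phi(u)^{-1}([0,c])$ and $\Delta(S,\preorderle)=\bigcup_c\ev_u^{-1}([0,c])$, with $\Phi(u)^{-1}([0,c))$ and $\ev_u^{-1}([0,c))$ forming compatible open exhaustions, $h$ carries open sets to open sets; being also a continuous bijection, it is a homeomorphism. (Equivalently, $\ev_u\circ h=\Phi(u)$ is proper, so $h$ is proper and hence closed into the locally compact Hausdorff space $\Delta(S,\preorderle)$.) The one genuinely substantial step is surjectivity, and within it the observation that properness of $\Phi(u)$ is precisely what allows the separating polynomial $G$ to be chosen nonnegative on all of $\Phi_F(X)$ rather than only on its compact part $\Phi_F(K)$; the remaining work — the rational approximation and the passage from $G$ to the pair $(c_1,c_2)$ — is routine.
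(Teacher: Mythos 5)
Your proposal is correct, and while the skeleton (define $h(p)(s)=\Phi(s)(p)$, check membership in $\Delta(S,\preorderle)$, continuity, injectivity, uniqueness) coincides with the paper's, your surjectivity argument is genuinely different. The paper argues by contradiction using functional analysis on the spectrum: it shows the algebra $A$ of functions $a\,\ev_s/\ev_u^{k+1}-b\,\ev_t/\ev_u^{k+1}$ (with $s,t\preorderle nu^k$) is dense in $C_0(\Delta(S,\preorderle))$ by Stone--Weierstrass, uses Urysohn to produce a bump function equal to $1$ at a hypothetical missed point $f_0$ and $0$ on $h(X)$ (properness of $\Phi(u)$ enters to make $h(\Phi(u)^{-1}([f_0(u)-\epsilon,f_0(u)+\epsilon]))$ compact, hence closed), and approximates it to obtain $s$ and $t+Nu^{k+1}$ with $\Phi(t+Nu^{k+1})\ge\Phi(s)$ but $f_0(t+Nu^{k+1})<f_0(s)$. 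You instead run a finite-intersection-property argument inside the compact set $K=\Phi(u)^{-1}([0,c])$, reduce to finitely many coordinates, and separate $(f_0(s))_{s\in F}$ from $\Phi_F(K)$ by an explicit rational quadratic, which you convert (clearing denominators, using $\Phi(s)^2=\Phi(s^2)$ and natural-number multiples, available in any semiring) into $c_1,c_2\in S$ with $\Phi(c_1)\ge\Phi(c_2)$ pointwise yet $f_0(c_1)<f_0(c_2)$; the contradiction mechanism (pointwise domination on $X$ forces $c_1\asymptoticge c_2$, hence $f_0(c_1)\ge f_0(c_2)$) is the same as the paper's, and properness of $\Phi(u)$ plays the analogous role of confining the negativity ball to $\{w:w_u<c\}$ so that nonnegativity on $\Phi_F(K)$ upgrades to nonnegativity on all of $\Phi_F(X)$. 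Your route buys elementarity and self-containedness (no Stone--Weierstrass, no Urysohn, no need to check that $A$ consists of functions vanishing at infinity), and you also make explicit the final step that $h$ is open (via properness of $h$, or the exhaustion by $\Phi(u)^{-1}([0,c])$ and $\ev_u^{-1}([0,c])$), which the paper's proof leaves implicit; the paper's route is shorter once the classical theorems are invoked and exhibits the natural function algebra on the spectrum. One cosmetic slip: in checking monotonicity you wrote that $x\preorderle y$ implies $x\asymptoticge y$; the correct orientation is $y\asymptoticge x$, which is what your subsequent conclusion $\Phi(y)\ge\Phi(x)$ actually uses.
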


The paper is structured as follows. Section~\ref{sec:asymptoticpreorder} studies basic properties of preordered semirings, including the asymptotic preorder and its relation to localization. In Section~\ref{sec:spectrum} we define the spectrum as a topological space and study the continuous maps between spectra induced by monotone homomorphisms between preordered semirings. Section~\ref{sec:mainproof} contains the proof of our main result, Theorem~\ref{thm:main}, and of Proposition~\ref{prop:uniqueness}.

\section{Asymptotic preorder}\label{sec:asymptoticpreorder}

By a semiring we mean a set equipped with binary operations $+$ and $\cdot$ that are commutative and associative and have neutral elements $0$ and $1$ such that $0\cdot a=a$ and $a(b+c)=ab+ac$ for any elements $a,b,c$. A semiring homomorphism $\varphi:S_1\to S_2$ is a map satisfying $\varphi(0)=0$, $\varphi(1)=1$, $\varphi(a+b)=\varphi(a)+\varphi(b)$ and $\varphi(ab)=\varphi(a)\varphi(b)$ for $a,b\in S_1$.
\begin{definition}
A preordered semiring is a pair $(S,\preorderle)$ where $S$ is a semiring, $\preorderle$ is a transitive and reflexive relation on $S$ such that $0\preorderle 1$ and when $a,b,c\in S$ satisfy $a\preorderle b$ then $a+c\preorderle b+c$ and $ac\preorderle bc$.

Let $(S_1,\preorderle_1)$ and $(S_2,\preorderle_2)$ be preordered semirings. A semiring homomorphism $\varphi:S_1\to S_2$ is monotone if $a,b\in S_1$, $a\preorderle_1 b$ implies $\varphi(a)\preorderle_2\varphi(b)$.
\end{definition}

\begin{definition}[{\cite[Definition 2.7.]{fritz2018generalization}}]
Let $(S,\preorderle)$ be a preordered semiring. An element $u\in S$ is power universal if $u\preorderge 1$ and for every $x\in S$ there is a $k\in\mathbb{N}$ such that $x\preorderle u^k$ and $u^kx\preorderge 1$.

If such an element exists then $S$ is said to be of polynomial growth.
\end{definition}
It is clear that any element larger than a power universal one is also power universal. More generally, if $u'\preorderge 1$ and $u\preorderle (u')^k$ for some $k\in\mathbb{N}$ then $u'$ is also power universal.

With the help of a power universal element we can define a generalization of the asymptotic preorder \cite[(2.7)]{strassen1988asymptotic} as follows.
\begin{definition}\label{def:asymptoticge}
Let $(S,\preorderle)$ be a preordered semiring and $u\in S$ a power universal element. The asymptotic preorder $\asymptoticle_u$ is defined as $x\asymptoticge_u y$ iff there is a sequence $(k_n)_{n\in\mathbb{N}}$ such that
\begin{equation}
\lim_{n\to\infty}\frac{k_n}{n}=0
\end{equation}
and
\begin{equation}\label{eq:asymptoticgedef}
\forall n\in\mathbb{N}:u^{k_n}x^n\preorderge y^n.
\end{equation}
\end{definition}
Since $u\preorderge 1$, we may assume whenever convenient that $(k_n)_{n\in\mathbb{N}}$ is nondecreasing. By multiplying the inequalities we may replace $k_n$ with a subadditive sequence, or even require the inequality in \eqref{eq:asymptoticgedef} only for infinitely many $n$.

The asymptotic preorder is defined in terms of a power universal element. However, changing to a different power universal element does not affect the resulting preorder, as the following lemma shows.
\begin{lemma}\label{lem:changepoweruniversal}
If $u_1$ and $u_2$ are power universal elements of $S$ then $\asymptoticle_{u_1}=\asymptoticle_{u_2}$.
\end{lemma}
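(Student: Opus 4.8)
The plan is to show that the relation $\asymptoticge_{u_1}$ is contained in $\asymptoticge_{u_2}$; the reverse containment, hence equality $\asymptoticle_{u_1}=\asymptoticle_{u_2}$, then follows by interchanging the roles of $u_1$ and $u_2$. The only consequence of power universality I would use is its defining property that every element is dominated by some power of a power universal element. Applying this to the element $u_1$ with the power universal element $u_2$ yields a constant $c\in\mathbb{N}$ with $u_1\preorderle u_2^c$ (and symmetrically some $c'\in\mathbb{N}$ with $u_2\preorderle u_1^{c'}$, which handles the reverse containment).

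The first step is an elementary monotonicity observation: from $u_1\preorderle u_2^c$ it follows that $u_1^m\preorderle u_2^{cm}$ for every $m\in\mathbb{N}$. I would prove this by induction on $m$, using only transitivity of $\preorderle$ and its compatibility with multiplication: multiply $u_1\preorderle u_2^c$ by $u_1^m$, multiply the inductive hypothesis $u_1^m\preorderle u_2^{cm}$ by $u_2^c$, and chain the two resulting inequalities. This uses neither the additive structure nor any positivity of elements, only the preordered-semiring axioms.

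Now, assuming $x\asymptoticge_{u_1}y$, fix a sequence $(k_n)_{n\in\mathbb{N}}$ of nonnegative integers with $k_n/n\to 0$ and $u_1^{k_n}x^n\preorderge y^n$ for all $n$. I would set $l_n:=c\,k_n$. By the monotonicity step, $u_2^{l_n}=u_2^{ck_n}\preorderge u_1^{k_n}$, so multiplying by $x^n$ and applying transitivity gives $u_2^{l_n}x^n\preorderge u_1^{k_n}x^n\preorderge y^n$ for every $n$; moreover $l_n/n=c\,(k_n/n)\to 0$. Hence $x\asymptoticge_{u_2}y$, which is what we wanted.

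I expect essentially no obstacle here: the content is just that $u_1$ and $u_2$ are polynomially comparable in both directions, and rescaling the witnessing sequence by a constant factor preserves sublinearity. The only points requiring (minor) care are carrying out the induction in the monotonicity step strictly within the axioms of a preordered semiring, and the routine bookkeeping that $(l_n)$ still satisfies $l_n/n\to 0$.
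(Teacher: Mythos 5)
Your proof is correct and follows essentially the same route as the paper: use power universality to get $u_1\preorderle u_2^c$, replace the witnessing exponents $k_n$ by $ck_n$, and note that sublinearity is preserved. The paper merely leaves the elementary step $u_1^{k_n}\preorderle u_2^{ck_n}$ implicit, which you spell out by induction.
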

\begin{proof}
By symmetry it is enough to prove $\asymptoticle_{u_1}\supseteq\asymptoticle_{u_2}$. Let $k\in\mathbb{N}$ such that $u_2\preorderle u_1^k$. Suppose that $x\asymptoticge_{u_2}y$. This means there is a sublinear sequence $(k_n)_{n\in\mathbb{N}}$ of natural numbers such that \eqref{eq:asymptoticgedef} holds with $u_2$. Then we have $u_1^{kk_n}x^n\preorderge u_2^{k_n}x^n\preorderge y^n$ for all $n$ and $kk_n/n\to 0$, therefore $x\asymptoticge_{u_1}y$.
\end{proof}

By Lemma~\ref{lem:changepoweruniversal} the asymptotic preorder is determined by the preordered semiring of polynomial growth even without specifying a power universal element. For this reason we will drop the subscript from the notation and write $\asymptoticle$ for the asymptotic preorder.

The following pair of lemmas show basic properties of the asymptotic preorder. Some of these are analogous to those in \cite[Lemma 2.3., Lemma 2.4.]{zuiddam2018algebraic}, with nearly identical proofs.
\begin{lemma}\label{lem:asymptoticproperties}
Let $(S,\preorderle)$ be a preordered semiring and $u$ a power universal element. Then
\begin{enumerate}[(i)]
\item\label{it:asymptoticlarger} $\preorderle\subseteq\asymptoticle$.
\item\label{it:asymptoticpreorderedsemiring} $(S,\asymptoticle)$ is a preordered semiring.
\item\label{it:asymptoticpolygrowth} $u$ is power universal with respect to the asymptotic preorder.
\item\label{it:asymptoticcancellation} Suppose that $x,y\in S$ and there is an $s\in S\setminus\{0\}$ such that $sx\preorderge sy$. Then $x\asymptoticge y$.
\item\label{it:asymptoticsmallfactors} Let $x,y,s,t\in S\setminus\{0\}$ and suppose that for all $n\in\mathbb{N}$ the inequality $sx^n\preorderge ty^n$ holds. Then $x\asymptoticge y$.
\end{enumerate}
\end{lemma}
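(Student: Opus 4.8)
The plan is to handle the five assertions in increasing order of difficulty, with \ref{it:asymptoticlarger}--\ref{it:asymptoticpolygrowth} being essentially formal and \ref{it:asymptoticcancellation}--\ref{it:asymptoticsmallfactors} carrying the content. For \ref{it:asymptoticlarger}: if $x\preorderge y$ then $x^n\preorderge y^n$ for every $n$ by compatibility of $\preorderle$ with products, so the constant zero sequence witnesses $x\asymptoticge y$. For \ref{it:asymptoticpreorderedsemiring} I would verify the axioms one at a time. Reflexivity and $0\asymptoticle 1$ follow from \ref{it:asymptoticlarger}. Transitivity: given witnessing sequences $(k_n)$ for $x\asymptoticge y$ and $(l_n)$ for $y\asymptoticge z$, multiplying $u^{k_n}x^n\preorderge y^n$ by $u^{l_n}$ yields $u^{k_n+l_n}x^n\preorderge z^n$, and $(k_n+l_n)/n\to 0$. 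Compatibility with multiplication: multiply $u^{k_n}x^n\preorderge y^n$ by $z^n$. Compatibility with addition is the only slightly delicate point here: assuming (as permitted by the remark following Definition~\ref{def:asymptoticge}) that $(k_n)$ is nondecreasing, one has $u^{k_n}x^j\preorderge u^{k_j}x^j\preorderge y^j$ for every $j\le n$ since $u\preorderge 1$; multiplying by $\binom{n}{j}z^{n-j}$ and summing over $j$ gives $u^{k_n}(x+z)^n\preorderge (y+z)^n$ via the binomial theorem in a commutative semiring. For \ref{it:asymptoticpolygrowth}, $u\preorderge 1$ gives $u\asymptoticge 1$ by \ref{it:asymptoticlarger}, and if $x\preorderle u^k$ and $u^kx\preorderge 1$ then the same inequalities hold for $\asymptoticle$, again by \ref{it:asymptoticlarger}.

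For \ref{it:asymptoticsmallfactors} I would first use power universality of $u$ to pick a single $k\in\mathbb{N}$ with $s\preorderle u^k$ and $u^kt\preorderge 1$ (possible since $s,t\neq 0$). Then, from the hypothesis $sx^n\preorderge ty^n$, the chain
\begin{equation}
u^{2k}x^n\preorderge u^k(sx^n)\preorderge u^k(ty^n)\preorderge y^n
\end{equation}
holds for every $n$, where the first step uses $s\preorderle u^k$ and the last uses $u^kt\preorderge 1$. Thus the constant sequence $k_n=2k$, which is sublinear, witnesses $x\asymptoticge y$. Finally, \ref{it:asymptoticcancellation} reduces to \ref{it:asymptoticsmallfactors}: starting from $sx\preorderge sy$, the telescoping induction
\begin{equation}
sx^{n+1}=(sx^n)x\preorderge (sy^n)x=(sx)y^n\preorderge (sy)y^n=sy^{n+1}
\end{equation}
shows $sx^n\preorderge sy^n$ for all $n$, and then \ref{it:asymptoticsmallfactors} applies with $t=s$.

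The proof has no real obstacle; the only points requiring a moment's care are the binomial bookkeeping in the additive part of \ref{it:asymptoticpreorderedsemiring} (which is precisely where the freedom to take $(k_n)$ nondecreasing is used) and, for \ref{it:asymptoticcancellation}--\ref{it:asymptoticsmallfactors}, the observation that once one has a \emph{fixed}-exponent inequality $sx^n\preorderge ty^n$, sandwiching it between powers of a power universal element costs only a constant — hence sublinear — number of extra factors of $u$. These are the natural analogues, in the polynomial-growth setting, of the corresponding arguments in \cite[Lemma 2.3., Lemma 2.4.]{zuiddam2018algebraic} for the case $u=2$.
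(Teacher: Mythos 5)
Your proposal is correct and follows essentially the same route as the paper's proof: constant witness sequences for (i) and (iii), the nondecreasing-$(k_n)$ binomial argument for additivity in (ii), and the observation that a fixed-exponent inequality costs only a constant number of extra factors of $u$ for (iv)--(v). The only deviation is cosmetic --- you derive (iv) from (v) via telescoping instead of proving it directly as the paper does; just note that (v) is stated for $x,y\in S\setminus\{0\}$ while (iv) allows $x$ or $y$ to be zero, which is harmless since your proof of (v) never uses $x,y\neq 0$ (and the zero cases are immediate), but the citation as literally stated is slightly off.
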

\begin{proof}
\ref{it:asymptoticlarger}: If $x\preorderge y$ then \eqref{eq:asymptoticgedef} is satisfied with $k_n=0$.
\ref{it:asymptoticpreorderedsemiring}: Reflexivity and $0\asymptoticle 1$ follows from \ref{it:asymptoticlarger}. To prove transitivity let $x\asymptoticge y$ and $y\asymptoticge z$. Choose sublinear sequences $(k_n)_{n\in\mathbb{N}}$ and $(l_n)_{n\in\mathbb{N}}$ such that $u^{k_n}x^n\preorderge y^n$ and $u^{l_n}y^n\preorderge z^n$ for all $n$. Then $n\mapsto k_n+l_n$ is also sublinear and $u^{k_n+l_n}x^n\preorderge u^{l_n}y^n\preorderge z^n$, therefore $x\asymptoticge z$. We prove compatibility with the operations. Let $x\asymptoticge y$ and $z\in S$, and choose $(k_n)_{n\in\mathbb{N}}$ sublinear and nondecreasing such that $u^{k_n}x^n\preorderge y^n$ for all $n$. Then also $u^{k_n}(xz)^n\preorderge (yz)^n$ for all $n$, therefore $xz\asymptoticge yz$. For the sum we use
\begin{equation}
\begin{split}
u^{k_n}(x+z)^n
 & = u^{k_n}\sum_{m=0}^n\binom{n}{m}x^mz^{n-m}  \\
 & \preorderge \sum_{m=0}^n\binom{n}{m}u^{k_m}x^mz^{n-m}  \\
 & \preorderge \sum_{m=0}^n\binom{n}{m}y^mz^{n-m}  \\
 & = (y+z)^n,
\end{split}
\end{equation}
therefore $x+z\asymptoticge y+z$.
\ref{it:asymptoticpolygrowth} is an immediate consequence of \ref{it:asymptoticlarger}.
\ref{it:asymptoticcancellation}: Let $k\in\mathbb{N}$ be such that $u^ks\preorderge 1$ and $u^k\preorderge s$. By induction, $sx^n\preorderge sx^{n-1}y\preorderge\ldots\preorderge sy^n$, and therefore $u^kx^n\preorderge y^n$ for all $n$. This implies $x\asymptoticge y$.
\ref{it:asymptoticsmallfactors}: Let $k,l\in\mathbb{N}$ be such that $u^kt\preorderge 1$ and $u^l\preorderge s$. Then $u^{k+l}x^n\preorderge u^ksx^n\preorderge u^kty^n\preorderge y^n$ for all $n$, therefore \eqref{eq:asymptoticgedef} is satisfied with $k_n=k+l$.
\end{proof}

By \ref{it:asymptoticpreorderedsemiring} and \ref{it:asymptoticpolygrowth} of Lemma~\ref{lem:asymptoticproperties} we can iterate the construction and form the asymptotic preorder $\aasymptoticle$ that compares large powers via $\asymptoticle$. However, the following lemma tells us that this does not give anything new.
\begin{lemma}\label{lem:aasymptotic}
$\aasymptoticle=\asymptoticle$.
\end{lemma}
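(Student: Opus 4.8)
The plan is to prove the two inclusions separately. The inclusion $\asymptoticle\subseteq\aasymptoticle$ is essentially free: by parts \ref{it:asymptoticpreorderedsemiring} and \ref{it:asymptoticpolygrowth} of Lemma~\ref{lem:asymptoticproperties}, $(S,\asymptoticle)$ is again a preordered semiring of polynomial growth, with the same power universal element $u$, so $\aasymptoticle$ is defined; applying part \ref{it:asymptoticlarger} of the same lemma to $(S,\asymptoticle)$ yields $\asymptoticle\subseteq\aasymptoticle$. All the work is therefore in the reverse inclusion $\aasymptoticle\subseteq\asymptoticle$.

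So suppose $x\aasymptoticge y$, and fix a power universal element $u$ (legitimate by Lemma~\ref{lem:changepoweruniversal}). Unfolding the definition, there is a sublinear sequence $(k_n)_n$ with $u^{k_n}x^n\asymptoticge y^n$ for every $n$; unfolding once more, for each $n$ there is a sublinear sequence $(l^{(n)}_m)_m$ with $u^{l^{(n)}_m}(u^{k_n}x^n)^m\preorderge(y^n)^m$, that is,
\begin{equation}
u^{\,m k_n+l^{(n)}_m}\,x^{nm}\preorderge y^{nm}\qquad\text{for all }m.
\end{equation}
The key step is a two-scale diagonal extraction. Given $i\in\mathbb{N}$, I first choose $n_i$ with $k_{n_i}/n_i<\tfrac{1}{2i}$, which is possible since $(k_n)_n$ is sublinear; then, since $(l^{(n_i)}_m)_m$ is sublinear, I choose $m_i$ large enough that both $l^{(n_i)}_{m_i}/(n_i m_i)<\tfrac{1}{2i}$ and $N_i:=n_i m_i$ is strictly larger than all previously chosen $N_1,\dots,N_{i-1}$. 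Setting $j_{N_i}:=m_i k_{n_i}+l^{(n_i)}_{m_i}$ we have $u^{j_{N_i}}x^{N_i}\preorderge y^{N_i}$ and $j_{N_i}/N_i<\tfrac1i$. Completing $(j_N)_N$ by $j_N:=0$ for $N\notin\{N_i:i\in\mathbb{N}\}$ gives a sublinear sequence for which the inequality in \eqref{eq:asymptoticgedef} holds at the infinitely many indices $N_i$; by the remark following Definition~\ref{def:asymptoticge} this is equivalent to $x\asymptoticge y$.

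The only genuine obstacle is the mismatch of scales: the error $k_n$ is measured at length $n$, whereas $l^{(n)}_m$ is measured at length $nm$, so a single sublinear bound appears only after diagonalizing over both $n$ and $m$. Passing to the "inequality for infinitely many exponents" form of the asymptotic preorder is what keeps this clean; one could instead demand an inequality at every length $N$, but then a bounded-length remainder $y^{N\bmod n}$ must be absorbed using power universality, which is more tedious and unnecessary here.
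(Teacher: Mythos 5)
Your proof is correct and follows essentially the same route as the paper: after unfolding the double asymptotic relation, one performs a two-scale diagonal choice of exponents (the paper picks, for each $n$, an $m_n$ with $l_{n,m_n}\le m_n$, while you pick a subsequence $n_i$ and then $m_i$ with explicit $1/(2i)$ bounds), obtaining inequalities with sublinear $u$-exponents at infinitely many indices and concluding via the remark after Definition~\ref{def:asymptoticge}. The only cosmetic difference is that you invoke that remark explicitly and pad the exponent sequence with zeros, whereas the paper uses it implicitly.
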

\begin{proof}
We have $\asymptoticle\subseteq\aasymptoticle$ by \ref{it:asymptoticlarger} of Lemma~\ref{lem:asymptoticproperties}. Let $x\aasymptoticge y$ and choose $(k_n)_{n\in\mathbb{N}}$ sublinear such that $u^{k_n}x^n\asymptoticge y^n$ for all $n$. Let $(l_{n,m})_{n,m\in\mathbb{N}}$ be such that for all $n$ $\lim_{m\to\infty}l_{n,m}/m=0$ and for all $n,m$
\begin{equation}
u^{l_{n,m}+mk_n}x^{nm}=u^{l_{n,m}}(u^{k_n}x^n)^m\preorderge (y^n)^m=y^{nm}
\end{equation}
holds. Choose a sequence $n\mapsto m_n$ such that $l_{n,m_n}\le m_n$ (e.g. let $m_n$ be the first index such that $l_{n,m_n}\le m_n$). Then
\begin{equation}
\lim_{n\to\infty}\frac{l_{n,m_n}+m_nk_n}{nm_n}\le\lim_{n\to\infty}\frac{1}{n}+\frac{k_n}{n}=0,
\end{equation}
therefore $x\asymptoticge y$.
\end{proof}

Our next goal is to understand how the asymptotic preorder interacts with localization. Let $T\subseteq S\setminus\{0\}$ be a multiplicative subset (i.e. $t_1,t_2\in T\implies t_1t_2\in T$) containing $1$. The localization of $S$ at $T$, denoted $T^{-1}S$ is the semiring $S\times T$ modulo the equivalence relation $(s_1,t_1)\sim(s_2,t_2)$ iff there exists $r\in T$ such that $rs_1t_2=rs_2t_1$, equipped with the operations $(s_1,t_1)+(s_2,t_2)=(s_1t_2+s_2t_1,t_1t_2)$, $(s_1,t_1)\cdot(s_2,t_2)=(s_1s_2,t_1t_2)$. We denote the equivalence class of $(s,t)$ by $\frac{s}{t}$. There is a canonical homomorphism $S\to T^{-1}S$ that sends $s$ to $\frac{s}{1}$. The additive and multiplicative neutral elements are $\frac{0}{1}$ and $\frac{1}{1}$.
\begin{lemma}\label{lem:localizationpreorderwelldefined}
Let $s_1,s_2,s_1',s_2'\in S$ and $t_1,t_2,t_1',t_2'\in T$ such that $\frac{s_1}{t_1}=\frac{s_1'}{t_1'}$ and $\frac{s_2}{t_2}=\frac{s_2'}{t_2'}$. Then $\exists r\in T:rs_1t_2\preorderge rs_2t_1$ iff $\exists r'\in T:r's_1't_2'\preorderge r's_2't_1'$.
\end{lemma}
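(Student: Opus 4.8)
The plan is to use the evident symmetry of the statement and prove only one implication: assuming $r\in T$ satisfies $rs_1t_2\preorderge rs_2t_1$, I will produce $r'\in T$ with $r's_1't_2'\preorderge r's_2't_1'$; the reverse implication then follows by interchanging the roles of the primed and unprimed data. First I would unwind the two hypotheses according to the definition of the localization: $\frac{s_1}{t_1}=\frac{s_1'}{t_1'}$ yields an element $p\in T$ with $ps_1t_1'=ps_1't_1$, and $\frac{s_2}{t_2}=\frac{s_2'}{t_2'}$ yields $q\in T$ with $qs_2t_2'=qs_2't_2$.

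The key step is to multiply the assumed inequality $rs_1t_2\preorderge rs_2t_1$ by the element $pqt_1't_2'$, which is allowed since $\preorderle$ is compatible with multiplication by arbitrary elements of $S$, and then to rewrite both sides using the two equalities above. On the left, regrouping the factor $ps_1t_1'$ and replacing it by $ps_1't_1$ turns the left-hand side into $pqr\,s_1't_1t_2t_2'$; on the right, regrouping $qs_2t_2'$ and replacing it by $qs_2't_2$ turns the right-hand side into $pqr\,s_2't_1t_1't_2$. Pulling out the common factor $pqrt_1t_2$ gives $(pqrt_1t_2)\,s_1't_2'\preorderge(pqrt_1t_2)\,s_2't_1'$, so $r':=pqrt_1t_2$ does the job, and it lies in $T$ because $T$ is multiplicative and contains $p,q,r,t_1,t_2$.

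I do not expect a genuine obstacle here; the whole argument is bookkeeping. The only point requiring care is to choose the multiplier $pqt_1't_2'$ so that \emph{both} substitutions become applicable after the appropriate commutative regrouping, and to keep track of exactly which denominators must be carried along so that what remains after factoring is precisely $s_1't_2'$ on one side and $s_2't_1'$ on the other. The substantive input is simply that $\preorderle$ is compatible with multiplication by elements of $S$ (not merely of $T$), which is what legitimizes the single multiplication step.
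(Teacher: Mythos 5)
Your proof is correct, and it follows the same basic route as the paper: reduce to one implication by symmetry, extract witnesses $p,q\in T$ from the two equalities of fractions, multiply the given inequality by a suitable element of $S$, and substitute using the two equalities. The only difference is the choice of multiplier, and it is a difference in your favor. The paper multiplies so as to take $r'=q_1q_2s_1t_2r$, which contains the factor $s_1\in S$; as written this $r'$ need not lie in $T$ (for instance it is $0$ when $s_1=0$, and $T\subseteq S\setminus\{0\}$), even though membership in $T$ is exactly what the lemma asserts. Your multiplier $pqt_1't_2'$ leads, after the two substitutions and factoring, to $r'=pqrt_1t_2$, which is manifestly in $T$ since $T$ is multiplicative and contains $p,q,r,t_1,t_2$. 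So your bookkeeping delivers the statement precisely as claimed, whereas the printed proof needs this small repair; the verification of the two regroupings ($ps_1t_1'\mapsto ps_1't_1$ on the left, $qs_2t_2'\mapsto qs_2't_2$ on the right) checks out, and the only structural input you use, compatibility of $\preorderle$ with multiplication by arbitrary elements of $S$, is part of the definition of a preordered semiring.
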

\begin{proof}
The roles of the primed and unprimed elements are symmetric, therefore it is enough to show that $rs_1t_2\preorderge rs_2t_1$ with $r\in T$ implies $\exists r'\in T:r's_1't_2'\preorderge r's_2't_1'$. Let $q_1,q_2\in T$ such that $q_1s_1t_1'=q_1s_1't_1$ and $q_2s_2t_2'=q_2s_2't_2$. Let $r'=(q_1q_2s_1t_2r)$. Then
\begin{equation}
\begin{split}
r's_1't_2'
 & = q_1q_2s_1t_2rs_1't_2'  \\
 & = q_1q_2s_1't_2'(rs_1t_2)  \\
 & \preorderge q_1q_2s_1't_2'(rs_2t_1)  \\
 & = (q_1t_1s_1')(q_2s_2t_2')r  \\
 & = (q_1t_1's_1)(q_2s_2't_2)r  \\
 & = (q_1q_2s_1t_2)rs_2't_1'  \\
 & = r's_2't_1'.
\end{split}
\end{equation}
\end{proof}

According to Lemma~\ref{lem:localizationpreorderwelldefined} we can define a relation $\preorderle$ on $T^{-1}S$ as
\begin{equation}\label{eq:localizationpreorderdef}
\frac{s_1}{t_1}\preorderge\frac{s_2}{t_2}\iff\exists r\in T:rs_1t_2\preorderge rs_2t_1.
\end{equation}

\begin{lemma}\leavevmode
\begin{enumerate}[(i)]
\item\label{it:localizationpreorderedsemiring} $(T^{-1}S,\preorderle)$ is a preordered semiring.
\item\label{it:localizationmapmonotone} The canonical homomorphism $s\mapsto\frac{s}{1}$ is monotone.
\item\label{it:localizationpoweruniversal} If $u\in S$ is power universal then $\frac{u}{1}$ is power universal in $T^{-1}S$.
\end{enumerate}
\end{lemma}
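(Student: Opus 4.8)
The plan is to verify the three parts directly: for (i) one checks the preordered-semiring axioms for the relation $\preorderle$ on $T^{-1}S$ defined in \eqref{eq:localizationpreorderdef}, well-definedness being already supplied by Lemma~\ref{lem:localizationpreorderwelldefined}; for (ii) one unwinds the definition; and for (iii) one lifts the power-universality witnesses from $S$. In (i), reflexivity holds with witness $r=1$ (since $rst\preorderge rst$), and $\frac{0}{1}\preorderle\frac{1}{1}$ holds because multiplying $0\preorderle 1$ by any $r\in T$ gives $0\preorderle r$, so $r=1$ witnesses $\frac{1}{1}\preorderge\frac{0}{1}$. For transitivity, given $rs_1t_2\preorderge rs_2t_1$ and $r's_2t_3\preorderge r's_3t_2$ with $r,r'\in T$, I would multiply the first inequality by $r't_3\in S$ and the second by $rt_1\in S$; the two resulting relations share the term $rr's_2t_1t_3$, so chaining them gives $(rr't_2)s_1t_3\preorderge(rr't_2)s_3t_1$ with $rr't_2\in T$, i.e. $\frac{s_1}{t_1}\preorderge\frac{s_3}{t_3}$. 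Compatibility with the operations reduces to the semiring level: from a witness $r_0s_1t_2\preorderge r_0s_2t_1$ of $\frac{s_1}{t_1}\preorderge\frac{s_2}{t_2}$, multiplying by $s_3t_3$ yields the numerator inequality for $\frac{s_1}{t_1}\cdot\frac{s_3}{t_3}\preorderge\frac{s_2}{t_2}\cdot\frac{s_3}{t_3}$, while multiplying by $t_3^2$ and then adding the common term $r_0s_3t_1t_2t_3$ to both sides yields the numerator inequality for $\frac{s_1}{t_1}+\frac{s_3}{t_3}\preorderge\frac{s_2}{t_2}+\frac{s_3}{t_3}$.

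Part (ii) is immediate: the map $s\mapsto\frac{s}{1}$ was already noted to be a semiring homomorphism, and if $s_1\preorderle s_2$ in $S$ then $r=1$ witnesses $\frac{s_1}{1}\preorderle\frac{s_2}{1}$ via \eqref{eq:localizationpreorderdef}. For (iii), $\frac{u}{1}\preorderge\frac{1}{1}$ follows from $u\preorderge 1$ and (ii). Fix $\frac{s}{t}\in T^{-1}S$; applying power universality of $u$ to $s\in S$ and to $t\in S$ (recall $t\in T\subseteq S$) yields $p,q\in\mathbb{N}$ with $s\preorderle u^p$, $u^ps\preorderge 1$, $t\preorderle u^q$ and $u^qt\preorderge 1$. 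Using $u\preorderge 1$, for every $k\ge p+q$ one has $u^kt\preorderge u^{p+q}t=u^p(u^qt)\preorderge u^p\preorderge s$ and, symmetrically, $u^ks\preorderge u^{p+q}s=u^q(u^ps)\preorderge u^q\preorderge t$. Fixing such a $k$ and taking $r=1$ in \eqref{eq:localizationpreorderdef}, the first chain gives $\frac{s}{t}\preorderle\frac{u^k}{1}=\left(\frac{u}{1}\right)^k$ and the second gives $\left(\frac{u}{1}\right)^k\cdot\frac{s}{t}=\frac{u^ks}{t}\preorderge\frac{1}{1}$, which is exactly what power universality of $\frac{u}{1}$ requires.

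Every step here is routine bookkeeping with the localization relation, so there is no real obstacle. The two places that call for a moment's thought are the choice of the common multiplier $rr't_2$ in the transitivity argument of (i) — chosen precisely so that the two lifted inequalities can be concatenated — and, in (iii), the point that a single exponent $k$ must serve both defining inequalities of power universality simultaneously, which is exactly where the hypothesis $u\preorderge 1$ enters, since it permits freely enlarging exponents.
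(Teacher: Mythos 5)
Your proof is correct and follows essentially the same route as the paper's: direct verification of the axioms with the same witnesses (reflexivity and monotonicity of $s\mapsto\frac{s}{1}$ via $r=1$, transitivity via the combined multiplier $rr't_2$, compatibility by multiplying the witness inequality, and power universality of $\frac{u}{1}$ via the combined exponent $k_s+k_t$). Your bookkeeping for the additive compatibility (multiplying by $t_3^2$ and adding the common cross term) is in fact slightly more careful than the paper's displayed inequality, but the argument is the same.
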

\begin{proof}
\ref{it:localizationpreorderedsemiring}: For $\frac{s}{t}\in T^{-1}S$ we have $st\preorderge st$, therefore $\frac{s}{t}\preorderge\frac{s}{t}$, i.e. $\preorderle$ is reflexive. Suppose that $\frac{s_1}{t_1}\preorderge\frac{s_2}{t_2}$ and $\frac{s_2}{t_2}\preorderge\frac{s_3}{t_3}$. This means $rs_1t_2\preorderge rs_2t_1$ and $qs_2t_3\preorderge qs_3t_2$ for some $r,q\in T$. Therefore
\begin{equation}
(qrt_2)s_1t_3=q(rs_1t_2)t_3 \preorderge q(rs_2t_1)t_3=rt_1(qs_2t_3)= \preorderge qrs_3t_1t_2=(qrt_2)s_3t_1,
\end{equation}
which implies $\frac{s_1}{t_1}\preorderge\frac{s_3}{t_3}$, i.e. $\preorderle$ is transtive.
We have $\frac{0}{1}\preorderle\frac{1}{1}$ since $0\preorderle 1$.
We prove compatibility with the operations. Suppose that $\frac{s_1}{t_1}\preorderge\frac{s_2}{t_2}$ and let $\frac{s'}{t'}\in T^{-1}S$. This means $rs_1t_2\preorderge rs_2t_1$ for some $r\in T$. Then $r(s_1t'+s't_1)t_2\preorderge r(s_2t'+s't_2)t_1$, therefore
\begin{equation}
\frac{s_1}{t_1}+\frac{s'}{t'}=\frac{s_1t'+s't_1}{t_1t'}\preorderge\frac{s_2t'+s't_2}{t_2t'}=\frac{s_2}{t_2}+\frac{s'}{t'},
\end{equation}
and $rs_1t_2s't'\preorderge rs_2t_1s't'$, therefore
\begin{equation}
\frac{s_1}{t_1}\frac{s'}{t'}=\frac{s_1s'}{t_1t'}\preorderge\frac{s_2s'}{t_2t'}=\frac{s_2}{t_2}\frac{s'}{t'}.
\end{equation}
\ref{it:localizationmapmonotone}: If $x,y\in S$ and $x\preorderge y$ then $\frac{x}{1}\preorderge\frac{y}{1}$ since \eqref{eq:localizationpreorderdef} is satisfied for $r=1$.
\label{it:localizationpoweruniversal}: Let $u\in S$ be power universal, $x=\frac{s}{t}\in T^{-1}S\setminus\{\frac{0}{1}\}$. Choose $k_s,k_t$ such that the inequalities $s\preorderle u^{k_s}$, $u^{k_s}s\preorderge 1$, $t\preorderle u^{k_t}$, $u^{k_t}t\preorderge 1$ hold. Then $s\preorderle u^{k_s}\preorderle u^{k_s+k_t}t$, therefore
\begin{equation}
\frac{s}{t}\preorderle\left(\frac{u}{1}\right)^{k_s+k_t},
\end{equation}
and $t\preorderle u^{k_t}\preorderle u^{k_s+k_t}s$, therefore
\begin{equation}
\frac{1}{1}\preorderle\left(\frac{u}{1}\right)^{k_s+k_t}\frac{s}{t}.
\end{equation}
\end{proof}
It should be noted that even though we use the same symbol for the preorder on $S$ and that induced on $T^{-1}S$, it is in general not true that $x\preorderge y$ iff $\frac{x}{1}\preorderge\frac{y}{1}$. When $S$ is of polynomial growth, we denote by $\asymptoticge$ the asymptotic preorder on $T^{-1}S$. Note that this could mean two different relations, depending on whether we form the asymptotic preorder on the localization or consider the preorder induced on $T^{-1}S$ by the asymptotic preorder on $S$. However, these two relations turn out to be the same. If $rs_1t_2\asymptoticge rs_2t_1$ then $r^nu^{o(n)}s_1^nt_2^n\preorderge r^ns_2^nt_1^n$ for all $n$, i.e.
\begin{equation}\label{eq:localizationasymptoticpreorder}
\left(\frac{u}{1}\right)^{o(n)}\left(\frac{s_1}{t_1}\right)^{n}\preorderge\left(\frac{s_2}{t_2}\right)^{n}
\end{equation}
Conversely, if \eqref{eq:localizationasymptoticpreorder} holds, then there is a sequence $(r_n)_{n\in\mathbb{N}}$ in $T$ such that $r_nu^{o(n)}(s_1t_2)^n\preorderge r_n(s_2t_1)^n$, which implies $s_1t_2\asymptoticge s_2t_3$ by \ref{it:asymptoticcancellation} of Lemma~\ref{lem:asymptoticproperties} and Lemma~\ref{lem:aasymptotic}. A similar argument in the following lemma shows that the asymptotic preorder is essentially the same on $S$ and on its image in $T^{-1}S$.
\begin{lemma}\label{lem:localizationasymptotic}
For $x,y\in S$ we have $x\asymptoticge y$ iff $\frac{x}{1}\asymptoticge \frac{y}{1}$.
\end{lemma}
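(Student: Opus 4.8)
The plan is to prove the two directions separately, using the preceding development on localization. The nontrivial content is that passing to $T^{-1}S$ does not create new asymptotic inequalities among elements coming from $S$.

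\medskip

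\noindent\textbf{Forward direction.} Suppose $x\asymptoticge y$ in $S$, i.e.\ there is a sublinear $(k_n)_{n\in\mathbb{N}}$ with $u^{k_n}x^n\preorderge y^n$ for all $n$. Applying the canonical homomorphism $s\mapsto\frac{s}{1}$, which is monotone and a semiring homomorphism, we get $\left(\frac{u}{1}\right)^{k_n}\left(\frac{x}{1}\right)^n\preorderge\left(\frac{y}{1}\right)^n$ for all $n$; since $\frac{u}{1}$ is power universal in $T^{-1}S$, this is exactly $\frac{x}{1}\asymptoticge\frac{y}{1}$. This direction is immediate and uses only functoriality of localization together with Lemma on power universality of $\frac{u}{1}$.

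\medskip

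\noindent\textbf{Reverse direction.} Assume $\frac{x}{1}\asymptoticge\frac{y}{1}$. Using the identification discussed just before the lemma (the asymptotic preorder on $T^{-1}S$ agrees with the one obtained by first localizing), there is a sublinear sequence $(k_n)_{n\in\mathbb{N}}$ with $\left(\frac{u}{1}\right)^{k_n}\left(\frac{x}{1}\right)^n\preorderge\left(\frac{y}{1}\right)^n$, and unwinding the definition \eqref{eq:localizationpreorderdef} of the induced preorder, for each $n$ there is $r_n\in T$ with $r_n u^{k_n}x^n\preorderge r_n y^n$ in $S$. Now I would fix any $n$ and apply \ref{it:asymptoticcancellation} of Lemma~\ref{lem:asymptoticproperties} with $s=r_n$ to the pair $u^{k_n}x^n$ and $y^n$: since $r_n(u^{k_n}x^n)\preorderge r_n y^n$ and $r_n\ne 0$, we obtain $u^{k_n}x^n\asymptoticge y^n$ in $S$. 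This holds for every $n$, which says precisely $u^{k_n}x^n\aasymptoticge y^n$ after absorbing the (constant-in-the-inner-limit) factor — more cleanly: $x^n\asymptoticge u^{-k_n}y^n$ does not literally make sense, so instead I would phrase it as $x\aasymptoticge y$ with the outer exponents $k_n$, i.e.\ apply the definition of $\aasymptoticge$ with the sublinear sequence $(k_n)$ and the inner relation $u^{k_n}x^n\asymptoticge y^n$. Then Lemma~\ref{lem:aasymptotic} gives $x\asymptoticge y$.

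\medskip

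\noindent\textbf{Main obstacle.} The only delicate point is bookkeeping: the witnesses $r_n$ depend on $n$, so one cannot directly produce a single $k\in\mathbb{N}$ as in \ref{it:asymptoticcancellation}. The resolution is exactly the two-step structure already set up in the excerpt — apply the cancellation lemma for each fixed $n$ to get $u^{k_n}x^n\asymptoticge y^n$, assemble these into a statement of the form $x\aasymptoticge y$, and invoke $\aasymptoticle=\asymptoticle$ (Lemma~\ref{lem:aasymptotic}) to collapse the iterated asymptotic preorder. This mirrors the argument given for \eqref{eq:localizationasymptoticpreorder} in the paragraph preceding the lemma, and I expect the write-up to be a short variant of it.
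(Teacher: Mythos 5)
Your proposal is correct and follows essentially the same route as the paper: the forward direction applies the monotone canonical homomorphism, and the reverse direction unwinds the localized preorder to get $r_nu^{k_n}x^n\preorderge r_ny^n$ with $r_n\in T$, then combines \ref{it:asymptoticcancellation} of Lemma~\ref{lem:asymptoticproperties} with Lemma~\ref{lem:aasymptotic}. Your write-up merely makes explicit the intermediate step (per-$n$ cancellation yielding $u^{k_n}x^n\asymptoticge y^n$, hence $x\aasymptoticge y$) that the paper leaves implicit.
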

\begin{proof}
Suppose that $x\asymptoticge y$, i.e. $u^{k_n}x^n\preorderge y^n$ for some sublinear sequence $k_n$ and all $n$. This implies $\left(\frac{u}{1}\right)^{k_n}\left(\frac{x}{1}\right)^{n}\preorderge\left(\frac{y}{1}\right)^{n}$, therefore $\frac{x}{1}\asymptoticge \frac{y}{1}$.

Suppose that $\frac{x}{1}\asymptoticge \frac{y}{1}$. This means there is a sublinear sequence $k_n$ and elements $r_n\in S\setminus\{0\}$ such that
\begin{equation}
r_nu^{k_n}x^n\preorderge r_ny^n
\end{equation}
for all $n\in\mathbb{N}$. By \ref{it:asymptoticcancellation} of Lemma~\ref{lem:asymptoticproperties} and Lemma~\ref{lem:aasymptotic} this implies $x\asymptoticge y$.
\end{proof}
According to Lemma~\ref{lem:localizationasymptotic}, we may safely identify $x$ with $\frac{x}{1}$ for the purposes of studying the asymptotic preorder, even though the canonical homomorphism $S\to T^{-1}S$ is in general not injective.

We conclude this section with a construction that enlarges a preorder by forcing an ordering on certain pairs of elements.
\begin{definition}\label{def:Rpreorderdef}
Let $(S,\preorderle)$ be a preordered semiring and $R\subseteq S\times S$ a relation. We define the relation $\preorderle_R$ on $S$ as $a\preorderle_R b$ iff there is an $n\in\mathbb{N}$ and $s_1,\ldots,s_n,x_1,\ldots,x_n,y_1,\ldots,y_n\in S$ such that
\begin{equation}
\forall i\in[n]:(x_i,y_i)\in R
\end{equation}
and
\begin{equation}\label{eq:Rpreorderdef}
a+s_1y_1+\cdots+s_ny_n\preorderle b+s_1x_1+\cdots+s_nx_n.
\end{equation}
\end{definition}

\begin{lemma}\label{lem:Rpreorderproperties}
\leavevmode
\begin{enumerate}[(i)]
\item\label{it:Rpreorderlarger} $\preorderle\subseteq\preorderle_R$
\item\label{it:RpreordercontainsR} $R\subseteq\preorderle_R$
\item\label{it:Rpreordersemiring} $(S,\preorderle_R)$ is a preordered semiring.
\item\label{it:Rpreorderunion} If $R_1,R_2\subseteq S\times S$ then $\preorderle_{R_1\cup R_2}=(\preorderle_{R_1})_{R_2}$.
\item\label{it:Rpreorderfinite} $\displaystyle\preorderle_R=\bigcup_{\substack{R'\subseteq R  \\  |R'|<\infty}}\preorderle_{R'}$.
\end{enumerate}
\end{lemma}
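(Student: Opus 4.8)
The plan is to verify the five items essentially from the definition, treating (iii) as the technical heart and deriving (iv) and (v) as formal consequences. For (i), given $a \preorderle b$ I simply take $n = 0$ in Definition~\ref{def:Rpreorderdef} (the empty sum), so the defining inequality reduces to $a \preorderle b$; for (ii), given $(x,y) \in R$ I take $n = 1$, $s_1 = 1$, $(x_1, y_1) = (x, y)$, so \eqref{eq:Rpreorderdef} becomes $x + y \preorderle y + x$, which holds by reflexivity of $\preorderle$. Thus $\preorderle \subseteq \preorderle_R$ and $R \subseteq \preorderle_R$.

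The substance is (iii). Reflexivity follows from (i) since $\preorderle$ is reflexive, and $0 \preorderle_R 1$ follows from $0 \preorderle 1$ via (i). For transitivity, suppose $a \preorderle_R b$ and $b \preorderle_R c$, witnessed respectively by data $(s_i, x_i, y_i)_{i=1}^n$ and $(s_j', x_j', y_j')_{j=1}^{m}$. I would add $\sum_j s_j' y_j'$ to both sides of the first inequality $a + \sum_i s_i y_i \preorderle b + \sum_i s_i x_i$ (using compatibility of $\preorderle$ with $+$), add $\sum_i s_i x_i$ to both sides of the second inequality $b + \sum_j s_j' y_j' \preorderle c + \sum_j s_j' x_j'$, and then chain the two using transitivity of $\preorderle$; after cancelling the common summand $b + \sum_i s_i x_i + \sum_j s_j' y_j'$ — which is legitimate only because it appears on both sides, i.e. I really just concatenate the witness lists — I obtain $a + \sum_i s_i y_i + \sum_j s_j' y_j' \preorderle c + \sum_i s_i x_i + \sum_j s_j' x_j'$, which exhibits $a \preorderle_R c$ with the concatenated list of $n + m$ witnesses. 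Compatibility with the operations is similar: if $a \preorderle_R b$ via $(s_i, x_i, y_i)_i$ and $c \in S$ is arbitrary, then adding $c$ to both sides of \eqref{eq:Rpreorderdef} gives the witness list $(s_i, x_i, y_i)_i$ again for $a + c \preorderle_R b + c$, while multiplying both sides by $c$ and using $c s_i$ in place of $s_i$ gives a witness list for $ac \preorderle_R bc$. I expect the only mild subtlety here to be bookkeeping: making sure the cancellation in the transitivity argument is presented as list concatenation rather than as an illegitimate use of a cancellation law, since preordered semirings need not be cancellative.

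For (iv), the inclusion $\preorderle_{R_1} \subseteq \preorderle_{R_1 \cup R_2}$ holds because every $R_1$-witness list is an $(R_1 \cup R_2)$-witness list, and then $(\preorderle_{R_1})_{R_2} \subseteq \preorderle_{R_1 \cup R_2}$ follows by applying the same reasoning as in (iii) — an $R_2$-step relative to $\preorderle_{R_1}$ unfolds into an $R_1$-list followed by an $R_2$-list; conversely, given an $(R_1 \cup R_2)$-witness list, split it into its $R_1$-entries and its $R_2$-entries, move the $R_1$-summands to realize a $\preorderle_{R_1}$-inequality and keep the $R_2$-summands to realize the outer $\preorderle_{R_2}$-step, yielding $a (\preorderle_{R_1})_{R_2} b$. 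Finally (v) is immediate: any single instance of $a \preorderle_R b$ uses only finitely many pairs $(x_i, y_i) \in R$, so setting $R' = \{(x_i, y_i) : i \in [n]\}$ gives $a \preorderle_{R'} b$, proving $\subseteq$; the reverse inclusion is monotonicity of $R \mapsto \preorderle_R$, which is the $R_1 = R'$, $R_2 = R \setminus R'$ case of the first inclusion in (iv). No step here is genuinely hard; the main obstacle is simply organizing (iii) so that the list-concatenation viewpoint is made explicit and reused cleanly in (iv).
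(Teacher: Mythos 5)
Your proposal is correct and follows essentially the same route as the paper: $n=0$ for (i), the trivial witness $x+y\preorderle y+x$ for (ii), chaining concatenated witness lists for transitivity in (iii), adding $c$ (resp.\ replacing $s_i$ by $cs_i$) for compatibility, unfolding/splitting witness lists for (iv), and finiteness of any single witness list for (v). The only cosmetic point is your phrase about ``cancelling the common summand'' in the transitivity step: nothing is cancelled—the chained inequality itself is already the required witness—but since you immediately recast it as list concatenation, the argument as stated is sound.
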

\begin{proof}
\ref{it:Rpreorderlarger}: If $a\preorderle b$ then we may choose $n=0$ in the definition, therefore $a\preorderle_R b$.
\ref{it:RpreordercontainsR}: If $(x,y)\in R$ then we can choose $n=1$, $s_1=1$, $x_1=x$, $y_1=y$ so that \eqref{eq:Rpreorderdef} becomes $x+y\preorderle y+x$, therefore $x\preorderle_R y$.
\ref{it:Rpreordersemiring}: By \ref{it:Rpreorderlarger} the relation $\preorderle_R$ is reflexive. We prove transitivity. Let $a\preorderle_R b$ and $b\preorderle_R c$. Then there are $n,m\in\mathbb{N}$ and elements $x_1,\ldots,x_n,y_1,\ldots,y_n,z_1,\ldots,z_m,w_1,\ldots,w_m\in S_0$ and $s_1,\ldots,s_n,t_1,\ldots,t_m\in S$ such that $(x_i,y_i)\in R$ and $(z_i,w_i)\in R$ for all $i$ in $[n]$, respectively $[m]$, and
\begin{align}
a+s_1y_1+\cdots+s_ny_n & \preorderle b+s_1x_1+\cdots+s_nx_n  \\
b+t_1w_1+\cdots+t_nw_m & \preorderle c+t_1z_1+\cdots+t_nz_m.
\end{align}
After adding $t_1w_1+\cdots+t_nw_m$ to both sides of the first inequality and $s_1x_1+\cdots+s_nx_n$ to both sides of the second one we can chain the inequalities and conclude $a\preorderle_R c$.

The compatibility with addition and multiplication can be seen directly by adding $c$ to (respectively multiplying by $c$) both sides of \eqref{eq:Rpreorderdef}.
\ref{it:Rpreorderunion}: Expanding the definition, we see that $a(\preorderle_{R_1})_{R_2}b$ means that there are natural numbers $n,n'$ and elements $s_1,\ldots,s_n,x_1,\ldots,x_n,y_1,\ldots,y_n S$ and $s'_1,\ldots,s'_{n'},x'_1,\ldots,x'_{n'},y'_1,\ldots,y'_{n'}\in S$ such that for all $i$ $(x_i,y_i)\in R_1$ and $(x'_i,y'_i)\in R_2$ and
\begin{equation}
(a+s_1y_1+\cdots+s_ny_n)+s'_1y'_1+\cdots+s'_{n'}y'_{n'}\preorderle (b+s_1x_1+\cdots+s_nx_n)+s'_1x'_1+\cdots+s'_{n'}x'_{n'},
\end{equation}
which is clearly equivalent to $a\preorderle_{R_1\cup R_2}$.
\ref{it:Rpreorderfinite} follows from the fact that \eqref{eq:Rpreorderdef} involves only finitely many pairs $(x_i,y_i)\in R$.
\end{proof}

\section{Spectrum}\label{sec:spectrum}

In this section we define and study the spectrum of a preordered semiring, i.e. the set of monotone semiring homomorphisms with the topology generated by the evaluation maps. We begin with properties related to compactness, and show that semirings of polynomial growth have locally compact spectra. Then we show that neither replacing the preorder with its asymptotic preorder nor localization affects the spectrum. Finally we prove that the inclusion of a certain type of subsemiring induces a surjective map on the spectra.

\begin{definition}
Let $(S,\preorderle)$ be a preordered semiring. The spectrum $\Delta(S,\preorderle)$ is the set of monotone semiring homomorphisms $S\to\mathbb{R}_{\ge 0}$ equipped with the initial topology with respect to the family of evaluation maps $\ev_s:\Delta(S,\preorderle)\to\mathbb{R}_{\ge 0}$ $(s\in S)$. For a subset $X\subseteq S$ we define $\ev_X:\Delta(S,\preorderle)\to\mathbb{R}_{\ge 0}^X$ as $f\mapsto(f(x))_{x\in X}$. Elements of the spectrum will be referred to as spectral points.

Let $(S_1,\preorderle_1)$ and $(S_2,\preorderle_2)$ be preordered semirings and let $\varphi:S_1\to S_2$ be a monotone homomorphism. We define the map $\Delta(\varphi):\Delta(S_2,\preorderle_2)\to\Delta(S_1,\preorderle_1)$ as $f\mapsto f\circ\varphi$.
\end{definition}

\begin{lemma}\label{lem:spectrumproperties}
Let $(S,\preorderle)$ be a preordered semiring.
\begin{enumerate}[(i)]
\item\label{it:closedembedding} $\ev_S$ is a closed embedding. In particular, $\Delta(S,\preorderle)$ is Tychonoff.
\item\label{it:locallycompact} Suppose that $S$ is generated by a single element $u$. Then $\ev_u$ is proper and $\Delta(S,\preorderle)$ is locally compact.
\end{enumerate}
Let $(S_1,\preorderle_1)$ and $(S_2,\preorderle_2)$ be preordered semirings and $\varphi:S_1\to S_2$ a monotone semiring homomorphism.
\begin{enumerate}[resume*]
\item\label{it:continuous} $\Delta(\varphi)$ is continuous.
\item\label{it:proper} If for every $x_2\in S_2$ there is an $x_1\in S_1$ such that $x_2\preorderle_2\varphi(x_1)$ then $\Delta(\varphi)$ is proper.
\end{enumerate}
\end{lemma}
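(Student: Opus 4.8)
The plan is to treat the four parts in order, using the closed embedding of part~\ref{it:closedembedding} as the workhorse for the two compactness/properness claims. For \ref{it:closedembedding}: the topology on $\Delta(S,\preorderle)$ is by definition the initial topology for the family $(\ev_s)_{s\in S}$, which coincides with the initial topology for the single map $\ev_S\colon\Delta(S,\preorderle)\to\mathbb{R}_{\ge0}^S$ into the product; since a monotone homomorphism is determined by its values, $\ev_S$ is injective, hence a topological embedding. Its image is the set of tuples $(a_s)_{s\in S}$ with $a_0=0$, $a_1=1$, $a_{s+t}=a_s+a_t$ and $a_{st}=a_sa_t$ for all $s,t\in S$, and $a_s\le a_t$ whenever $s\preorderle t$. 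Each of these is a closed condition depending on finitely many coordinates, so the image is closed and $\ev_S$ is a closed embedding; as a subspace of the Tychonoff space $\mathbb{R}_{\ge0}^S$, $\Delta(S,\preorderle)$ is then Tychonoff.

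For \ref{it:locallycompact}: if $S$ is generated by $u$, every $s\in S$ is a polynomial in $u$ with nonnegative integer coefficients, so for $f\in\Delta(S,\preorderle)$ the value $f(s)$ is that polynomial evaluated at $f(u)$; in particular the bound $f(u)\le N$ forces $f(s)\le M_s$ for some $M_s$ depending only on $N$ and $s$. Hence $\ev_S$ maps $\ev_u^{-1}([0,N])$ into the compact box $\prod_{s\in S}[0,M_s]$. Since $\ev_u^{-1}([0,N])$ is closed in $\Delta(S,\preorderle)$ and $\ev_S$ is a closed embedding, its image is closed in $\mathbb{R}_{\ge0}^S$, hence a closed subset of a compact set and therefore compact; so $\ev_u^{-1}([0,N])$ is compact. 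As every compact subset of $\mathbb{R}_{\ge0}$ lies in some $[0,N]$, this shows $\ev_u$ is proper, and since every $f$ lies in the open set $\ev_u^{-1}([0,f(u)+1))$ which is contained in the compact set $\ev_u^{-1}([0,f(u)+1])$, $\Delta(S,\preorderle)$ is locally compact.

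For \ref{it:continuous} and \ref{it:proper}: first note $f\mapsto f\circ\varphi$ does land in $\Delta(S_1,\preorderle_1)$, since composites of monotone homomorphisms are monotone homomorphisms. For \ref{it:continuous}, by the universal property of the initial topology it suffices to check that $\ev_{s_1}\circ\Delta(\varphi)$ is continuous for each $s_1\in S_1$; but this map sends $f$ to $f(\varphi(s_1))$, so it equals $\ev_{\varphi(s_1)}$, continuous by definition of the topology on $\Delta(S_2,\preorderle_2)$. For \ref{it:proper}, let $K\subseteq\Delta(S_1,\preorderle_1)$ be compact; it is closed (Hausdorff), so $\Delta(\varphi)^{-1}(K)$ is closed in $\Delta(S_2,\preorderle_2)$ by \ref{it:continuous}. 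Given $s_2\in S_2$, choose $x_1$ with $s_2\preorderle_2\varphi(x_1)$; then for $f\in\Delta(\varphi)^{-1}(K)$, monotonicity of $f$ gives $f(s_2)\le f(\varphi(x_1))=(f\circ\varphi)(x_1)\le\sup_{g\in K}g(x_1)=:M_{s_2}$, finite because $\ev_{x_1}(K)$ is compact. Thus $\ev_{S_2}$ maps $\Delta(\varphi)^{-1}(K)$ into the compact box $\prod_{s_2\in S_2}[0,M_{s_2}]$, and exactly as in \ref{it:locallycompact} the closed embedding $\ev_{S_2}$ forces $\Delta(\varphi)^{-1}(K)$ to be compact.

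The routine parts are \ref{it:closedembedding} and \ref{it:continuous}, which amount to initial-topology bookkeeping. The step needing the most care is \ref{it:locallycompact}: the content is that single generation makes every coordinate of $\ev_S$ a fixed polynomial in $\ev_u$, so a bound on $\ev_u$ propagates to a compact box containing the image of $\ev_u^{-1}([0,N])$; combined with \ref{it:closedembedding}, this is exactly what upgrades the obvious closedness of $\ev_u^{-1}([0,N])$ to compactness. Part \ref{it:proper} runs on the same mechanism, with the covering hypothesis on $\varphi$ together with monotonicity supplying the coordinatewise bounds in place of single generation.
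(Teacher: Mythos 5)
Your proof is correct and follows essentially the same route as the paper: part~(i) via the closed embedding into $\mathbb{R}_{\ge 0}^S$ (closedness because the defining conditions are stable under pointwise limits), part~(iii) by the initial-topology argument, and part~(iv) by trapping $\Delta(\varphi)^{-1}(K)$ in a compact box $\prod_{s_2}[0,M_{s_2}]$ using the covering hypothesis. The only cosmetic difference is in part~(ii): the paper observes that $\ev_s=p_s(\ev_u)$ makes $\ev_u$ itself a closed embedding into $\mathbb{R}_{\ge 0}$ and reads off properness and local compactness from that, whereas you reuse the compact-box mechanism to show each $\ev_u^{-1}([0,N])$ is compact directly; both are valid and of the same spirit.
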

\begin{proof}
\ref{it:closedembedding}: The evaluation map $\ev_S$ clearly separates the points of $\Delta(S,\preorderle)$, thus gives an embedding into the Tychonoff space $\mathbb{R}_{\ge 0}^S$. The property of being a monotone semiring homomorphism is preserved by pointwise limits, therefore the embedding is closed.
\ref{it:locallycompact}: $S$ consists of polynomials in $u$ with coefficients from $\mathbb{N}$ and $\ev_{p(u)}=p(\ev_u)$. This means that $\ev_u$ already separates the points and its continuity is equivalent to the continuity of all evaluation maps. Therefore $\ev_u:\Delta(S,\preorderle)\to\mathbb{R}_{\ge 0}$ is a closed embedding. This implies that $\ev_u$ is proper and $\Delta(S,\preorderle)$ is locally compact.
\ref{it:continuous}: $\setbuild{\ev_s^{-1}(U)}{s\in S_1,U\subseteq\mathbb{R}_{\ge0}\text{ open}}$ is a subbasis for the topology of $\Delta(S_1,\preorderle_1)$. The preimage of such a set under $\Delta(\varphi)$ is $\ev_{\varphi(s)}^{-1}(U)$, which is open.
\ref{it:proper}: Let $C_1\subseteq\Delta(S_1,\preorderle_1)$ be compact and let $C_2=\Delta(\varphi)^{-1}(C_1)\subseteq\Delta(S_2,\preorderle_2)$. $C_1$ is a compact subset of a Hausdorff space, therefore closed, which implies that $C_2$ is also closed. For all $x_2\in S_2$ let
\begin{equation}
B_{x_2}=\inf_{\substack{x_1\in S_1  \\  x_2\preorderle_2\varphi(x_1)}}\sup_{f\in C_1}f(x_1).
\end{equation}
The infimum is by assumption over a nonempty set and the supremum is of a continuous function ($\ev_{x_1}$) over a compact set, therefore $B_{x_2}$ is finite. If $f\in C_2$ then $f\circ\varphi\in C_1$ so $f(x_2)\in[0,B_{x_2}]$. Thus $\ev_{S_2}$ embeds $C_2$ as a closed subset of the compact space $\prod_{x_2\in S_2}[0,B_{x_2}]$.
\end{proof}

\begin{proposition}\label{prop:polygrowthlocallycompact}
Let $(S,\preorderle)$ be a preordered semiring of polynomial growth and suppose that $u\in S$ is power universal. Then $\Delta(S,\preorderle)$ is locally compact and $\ev_u:\Delta(S,\preorderle)\to\mathbb{R}_{\ge0}$ is proper.
\end{proposition}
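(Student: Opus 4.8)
The plan is to reduce to the single-generator case already settled in Lemma~\ref{lem:spectrumproperties}\ref{it:locallycompact} by pulling back along the inclusion of the subsemiring generated by $u$. I would write $S_u$ for the subsemiring of $S$ generated by $u$, equipped with the restriction of $\preorderle$, and let $\iota:S_u\hookrightarrow S$ be the inclusion; this is a monotone semiring homomorphism. Since $S_u$ is generated by the single element $u$, Lemma~\ref{lem:spectrumproperties}\ref{it:locallycompact} gives that $\Delta(S_u,\preorderle)$ is locally compact and that $\ev_u:\Delta(S_u,\preorderle)\to\mathbb{R}_{\ge 0}$ is proper (indeed a closed embedding).

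The key step is to show that $\Delta(\iota):\Delta(S,\preorderle)\to\Delta(S_u,\preorderle)$ is proper. It is continuous by Lemma~\ref{lem:spectrumproperties}\ref{it:continuous}. To apply Lemma~\ref{lem:spectrumproperties}\ref{it:proper} I would verify its hypothesis: every $x\in S$ lies below $\iota(w)$ for some $w\in S_u$. This is precisely power universality of $u$, which supplies $k\in\mathbb{N}$ with $x\preorderle u^k=\iota(u^k)$, and $u^k\in S_u$. Hence $\Delta(\iota)$ is proper.

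Both assertions then drop out. For $f\in\Delta(S,\preorderle)$ one has $f(u)=(f\circ\iota)(u)$, so $\ev_u$ on $\Delta(S,\preorderle)$ is the composite of $\Delta(\iota)$ with $\ev_u$ on $\Delta(S_u,\preorderle)$; since a composition of proper maps is proper, $\ev_u$ is proper on $\Delta(S,\preorderle)$. For local compactness I would argue directly from properness of $\Delta(\iota)$: given $f\in\Delta(S,\preorderle)$, take a compact neighborhood $K$ of $\Delta(\iota)(f)$ in the locally compact Hausdorff space $\Delta(S_u,\preorderle)$ and let $U$ be its interior; then $\Delta(\iota)^{-1}(K)$ is compact and contains the open set $\Delta(\iota)^{-1}(U)\ni f$, so it is a compact neighborhood of $f$. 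This uses that $\Delta(S,\preorderle)$ is Hausdorff, which holds since it is Tychonoff by Lemma~\ref{lem:spectrumproperties}\ref{it:closedembedding}.

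I do not foresee any real obstacle; the only points deserving an explicit word are the two standard facts about proper maps used above — preservation under composition, and that a proper continuous map into a locally compact Hausdorff space has locally compact domain — together with the trivial observation that the statement holds vacuously when $\Delta(S,\preorderle)=\emptyset$.
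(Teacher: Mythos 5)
Your proposal is correct and follows essentially the same route as the paper: restrict to the subsemiring generated by $u$, use parts \ref{it:locallycompact} and \ref{it:proper} of Lemma~\ref{lem:spectrumproperties} (with power universality furnishing the domination hypothesis $x\preorderle u^k$), and conclude by composing proper maps. The only difference is that you spell out the standard facts about proper maps that the paper uses implicitly.
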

\begin{proof}
Let $S_0$ be the subsemiring generated by $u$ and $i:S_0\to S$ the inclusion. By \ref{it:locallycompact} and \ref{it:proper} of Lemma~\ref{lem:spectrumproperties}, $\Delta(i):\Delta(S,\preorderle)\to\Delta(S_0,\preorderle)$ is a proper map into a locally compact space $\Delta(S_0,\preorderle)$, thus $\Delta(S,\preorderle)$ is also locally compact.

$\ev^{S}_u=\ev^{S_0}_u\circ\Delta(i)$ (note that $u$ may be regarded as an element of both semirings and the domain of the evaluation map is the spectrum of the semiring indicated in the superscript) is a composition of proper maps, therefore also proper.
\end{proof}

\begin{lemma}\label{lem:spectrumofasymptoticpreorder}
Let $(S,\preorderle)$ be a preordered semiring of polynomial growth. Let $j:(S,\preorderle)\to(S,\asymptoticle)$ be the monotone homomorphism with underlying homomorphism the identity. Then $\Delta(j):\Delta(S,\asymptoticle)\to\Delta(S,\preorderle)$ is a homeomorphism.
\end{lemma}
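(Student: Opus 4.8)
The plan is to observe that since the underlying homomorphism of $j$ is the identity, $\Delta(j)$ is simply the map $f\mapsto f$ that reinterprets an $\asymptoticle$-monotone homomorphism as a $\preorderle$-monotone one; this makes sense precisely because $\preorderle\subseteq\asymptoticle$ by \ref{it:asymptoticlarger} of Lemma~\ref{lem:asymptoticproperties}. In particular $\Delta(j)$ is injective on underlying functions $S\to\mathbb{R}_{\ge0}$, so it remains to check that it is surjective, continuous, and open.

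Continuity is immediate from \ref{it:continuous} of Lemma~\ref{lem:spectrumproperties}. For openness, recall that the topology of $\Delta(S,\asymptoticle)$ is generated by the subbasic sets $\ev_s^{-1}(U)$ with $s\in S$ and $U\subseteq\mathbb{R}_{\ge0}$ open, and that $\Delta(j)$ maps such a set onto $\setbuild{f\in\Delta(S,\preorderle)}{f(s)\in U}=\ev_s^{-1}(U)$, which is subbasic open in $\Delta(S,\preorderle)$; hence $\Delta(j)$ is an open map. (Equivalently, once the underlying sets are identified, both carry the initial topology with respect to the same family of evaluation maps.)

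The one substantive point is surjectivity: every $\preorderle$-monotone homomorphism $f\colon S\to\mathbb{R}_{\ge0}$ is automatically $\asymptoticle$-monotone. To see this, fix a power universal element $u$; from $1\preorderle u$ we get $f(u)\ge f(1)=1$. Suppose $x\asymptoticge y$, witnessed by a sublinear sequence $(k_n)_{n\in\mathbb{N}}$ with $u^{k_n}x^n\preorderge y^n$ for all $n$. Applying $f$ gives $f(u)^{k_n}f(x)^n\ge f(y)^n$, and taking $n$-th roots yields $f(u)^{k_n/n}f(x)\ge f(y)$. Letting $n\to\infty$ and using $k_n/n\to0$ together with $f(u)\ge1$ gives $f(u)^{k_n/n}\to1$, hence $f(x)\ge f(y)$. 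Therefore $f\in\Delta(S,\asymptoticle)$, so $\Delta(j)$ is onto. I expect this $n$-th-root limiting argument to be the heart of the proof; the remaining points are formal, being essentially the observation that the two spectra share the same points and the same generating family of evaluation maps.
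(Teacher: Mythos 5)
Your proposal is correct and follows essentially the same route as the paper: identify $\Delta(j)$ as the identity on underlying functions, prove surjectivity by the $n$-th-root limiting argument using $f(u)\ge 1$ and $k_n/n\to 0$, and conclude the homeomorphism from the fact that both spectra carry the initial topology with respect to the same evaluation maps. The only cosmetic caveat is that your image computation for subbasic open sets already presupposes surjectivity (so openness should logically come after it), but as you note, the cleaner closing observation about identical generating families makes this moot.
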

\begin{proof}
$\Delta(j)$ is injective since for $f\in\Delta(S,\asymptoticle)$ we have $\Delta(j)(f)(x)=f(j(x))=f(x)$ for all $x\in S$. To see that $\Delta(j)$ is surjective we show that every $f\in\Delta(S,\preorderle)$ is also monotone under $\asymptoticle$. Suppose that $x\asymptoticge y$. Let $(k_n)_{n\in\mathbb{N}}$ be as in Definition~\ref{def:asymptoticge}, i.e. $k_n/n\to 0$ and for all $n\in\mathbb{N}$ $u^{k_n}x^n\ge y^n$. Then for every $f\in\Delta(S,\preorderle)$ we have
\begin{equation}
f(u)^{k_n}f(x)^n\ge f(y)^n.
\end{equation}
After taking roots and letting $n$ go to infinity we get $f(x)\ge f(y)$.

Thus $\Delta(j)$ is a bijection and can be used to identify the two spectra as sets. Under this identification the evaluation maps are the same in both cases, and so are the topologies they generate. Therefore $\Delta(j)$ is a homeomorphism.
\end{proof}

\begin{lemma}\label{lem:spectrumlocalization}
Let $(S,\preorderle)$ be a preordered semiring of polynomial growth and $T\subseteq S\setminus\{0\}$ a multiplicative set containing $1$. Let $j:S\mapsto T^{-1}S$ be the canonical map $x\mapsto\frac{x}{1}$. Then $\Delta(j):\Delta(T^{-1}S,\preorderle)\to\Delta(S,\preorderle)$ is a homeomorphism.
\end{lemma}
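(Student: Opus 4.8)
The plan is to show that $\Delta(j)$ is a continuous bijection and then argue that its inverse is also continuous by comparing the subbasic open sets on both sides. Continuity of $\Delta(j)$ is free from part~\ref{it:continuous} of Lemma~\ref{lem:spectrumproperties}. For injectivity, note that every element of $T^{-1}S$ has the form $\frac{s}{t}$ with $t\in T$, so if $f\in\Delta(T^{-1}S,\preorderle)$ is known on the image of $j$ then $f\!\left(\frac{1}{t}\right)f\!\left(\frac{t}{1}\right)=f\!\left(\frac{1}{1}\right)=1$ forces $f\!\left(\frac{1}{t}\right)=1/f\!\left(\frac{t}{1}\right)$, hence $f\!\left(\frac{s}{t}\right)=f\!\left(\frac{s}{1}\right)/f\!\left(\frac{t}{1}\right)$ is determined by $f\circ j$; in particular $\Delta(j)$ is injective. (One should first check $f\!\left(\frac{t}{1}\right)\neq 0$: since $t\in T\subseteq S\setminus\{0\}$ is invertible in $T^{-1}S$ with inverse $\frac{1}{t}$, a multiplicative homomorphism to $\mathbb{R}_{\ge 0}$ must send it to a nonzero value.)

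For surjectivity, given $g\in\Delta(S,\preorderle)$ I would define $f:T^{-1}S\to\mathbb{R}_{\ge 0}$ by $f\!\left(\frac{s}{t}\right)=g(s)/g(t)$. The first task is well-definedness: if $\frac{s_1}{t_1}=\frac{s_2}{t_2}$ there is $r\in T$ with $rs_1t_2=rs_2t_1$, and applying $g$ gives $g(r)g(s_1)g(t_2)=g(r)g(s_2)g(t_1)$; since $g(r)\neq 0$ and $g(t_1),g(t_2)\neq 0$ (same reasoning as above — the $t_i$ lie in $T\subseteq S\setminus\{0\}$, but here we need $g(t)\neq 0$ for $t\in T$, which follows because $t$ becomes invertible after localization; alternatively this is exactly where polynomial growth enters, via power universality, to rule out $g(t)=0$), we get $g(s_1)/g(t_1)=g(s_2)/g(t_2)$. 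Then one checks directly that $f$ is additive, multiplicative, sends $\frac{1}{1}$ to $1$, and is monotone: if $\frac{s_1}{t_1}\preorderge\frac{s_2}{t_2}$ then $rs_1t_2\preorderge rs_2t_1$ for some $r\in T$, so $g(r)g(s_1)g(t_2)\ge g(r)g(s_2)g(t_1)$ and dividing by the positive quantity $g(r)g(t_1)g(t_2)$ yields $f\!\left(\frac{s_1}{t_1}\right)\ge f\!\left(\frac{s_2}{t_2}\right)$. By construction $f\circ j=g$, so $\Delta(j)$ is onto.

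Finally, having a continuous bijection $\Delta(j)$, I would identify the two spectra as sets via this bijection and compare topologies. On $\Delta(T^{-1}S,\preorderle)$ the topology is generated by the maps $\ev_{s/t}$, and under the identification $\ev_{s/t}(f) = g(s)/g(t) = \ev_s(g)/\ev_t(g)$; since $\ev_t$ is nowhere zero and continuous on $\Delta(S,\preorderle)$, the function $\ev_{s/t}$ is already continuous with respect to the topology of $\Delta(S,\preorderle)$. Hence the topology pulled back from $\Delta(T^{-1}S,\preorderle)$ is contained in the topology of $\Delta(S,\preorderle)$, and combined with continuity of $\Delta(j)$ the two coincide, so $\Delta(j)$ is a homeomorphism.

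The main obstacle is the recurring need to know that $g(t)\neq 0$ for $t\in T$, i.e.\ that spectral points of $(S,\preorderle)$ never vanish on a multiplicative set of nonzero elements. This must be extracted from polynomial growth: if $u$ is power universal, then for any $t\in S\setminus\{0\}$ there is $k$ with $u^k t\preorderge 1$, whence $g(u)^k g(t)\ge g(1)=1$, forcing $g(t)>0$. I would isolate this as the key preliminary observation and then the rest of the argument is routine bookkeeping with fractions.
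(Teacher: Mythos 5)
Your proof is correct and essentially identical to the paper's: injectivity via nonvanishing of spectral points of $T^{-1}S$ on elements $\frac{t}{1}$, surjectivity by setting $f\left(\frac{s}{t}\right)=g(s)/g(t)$ with well-definedness and monotonicity secured by the positivity $g(t)>0$, and the homeomorphism from the observation that under the identification $\ev_{s/t}=\ev_s/\ev_t$ with $\ev_t$ continuous and nowhere zero, so both families of evaluation maps generate the same topology. The only slip is the parenthetical claim that $g(t)\neq 0$ "follows because $t$ becomes invertible after localization" --- that reasoning is fine for the injectivity step, where the homomorphism is defined on $T^{-1}S$ and must send the invertible element $\frac{t}{1}$ to a nonzero value, but it says nothing about $g$, which is only defined on $S$; the power-universality argument you isolate at the end ($u^k t\preorderge 1$ gives $g(u)^k g(t)\ge 1$) is what is actually needed there, and it is exactly the paper's argument.
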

\begin{proof}
We prove that $\Delta(j)$ is injective. Let $\tilde{f}_1,\tilde{f}_2\in\Delta(T^{-1}S,\preorderle)$ be different. Then there are $s\in S$ and $t\in T$ such that $\tilde{f}_1(\frac{s}{t})\neq \tilde{f}_2(\frac{s}{t})$. Since $\frac{1}{1}=\frac{t}{1}\frac{1}{t}$ we have $\tilde{f}_1\left(\frac{t}{1}\right)\neq 0$ and $\tilde{f}_2\left(\frac{t}{1}\right)\neq 0$. If $\tilde{f}_1\left(\frac{t}{1}\right)\neq\tilde{f}_2\left(\frac{t}{1}\right)$ then $\Delta(j)(\tilde{f}_1)$ and $\Delta(j)(\tilde{f}_2)$ are different at $t$. Otherwise we have
\begin{equation}
\tilde{f}_1\left(\frac{s}{1}\right)=\tilde{f}_1\left(\frac{s}{t}\right)\tilde{f}_1\left(\frac{t}{1}\right)\neq\tilde{f}_2\left(\frac{s}{t}\right)\tilde{f}_2\left(\frac{t}{1}\right)=\tilde{f}_2\left(\frac{s}{1}\right),
\end{equation}
and therefore $\Delta(j)(\tilde{f}_1)$ and $\Delta(j)(\tilde{f}_2)$ are different at $s$.

We prove that $\Delta(j)$ is surjective. Let $f\in\Delta(S,\preorderle)$. Let $u\in S$ be power universal. For $s\in S\setminus\{0\}$ there is a $k\in\mathbb{N}$ such that $1\preorderle u^ks$. Applying $f$ to both sides we get $1\le f(u)^kf(s)$, which implies $f(s)>0$. If $\frac{s_1}{t_1}=\frac{s_2}{t_2}$ then by definition $rs_1t_2=rs_2t_1$ for some $r\in T$, therefore $f(r)f(s_1)f(t_2)=f(r)f(s_2)f(t_1)$, i.e. $f(s_1)f(t_2)=f(s_2)f(t_1)$. This means that the equality
\begin{equation}
\tilde{f}\left(\frac{s}{t}\right)=\frac{f(s)}{f(t)}
\end{equation}
gives a well-defined function on $T^{-1}S$. We claim that $\tilde{f}\in\Delta(T^{-1}S,\preorderle)$ and $\Delta(j)(\tilde{f})=f$. For $\frac{s_1}{t_1},\frac{s_2}{t_2}\in T^{-1}S$ we have
\begin{equation}
\begin{split}
\tilde{f}\left(\frac{s_1}{t_1}+\frac{s_2}{t_2}\right)
 & = \tilde{f}\left(\frac{s_1t_2+s_2t_1}{t_1t_2}\right)  \\
 & = \frac{f(s_1t_2+s_2t_1)}{f(t_1t_2)}=\frac{f(s_1)}{f(t_1)}+\frac{f(s_2)}{f(t_2)}=\tilde{f}\left(\frac{s_1}{t_1}\right)+\tilde{f}\left(\frac{s_1}{t_1}\right)
\end{split}
\end{equation}
and
\begin{equation}
\tilde{f}\left(\frac{s_1}{t_1}\cdot\frac{s_2}{t_2}\right)=\tilde{f}\left(\frac{s_1s_2}{t_1t_2}\right)=\frac{f(s_1s_2)}{f(t_1t_2)}=\frac{f(s_1)}{f(t_1)}\frac{f(s_2)}{f(t_2)}=\tilde{f}\left(\frac{s_1}{t_1}\right)\tilde{f}\left(\frac{s_1}{t_1}\right).
\end{equation}
Let $\frac{s_1}{t_1}\preorderge\frac{s_2}{t_2}$. This means there exists $r\in T$ such that $rs_1t_2\preorderge rs_2t_1$. Since $f$ is monotone and $f(r)>0$, this implies
\begin{equation}
\tilde{f}\left(\frac{s_1}{t_1}\right)=\frac{f(s_1)}{f(t_1)}\ge\frac{f(s_2)}{f(t_2)}=\tilde{f}\left(\frac{s_2}{t_2}\right).
\end{equation}
If $x\in S$ then clearly
Clearly $\tilde{f}(j(x))=\tilde{f}(\frac{x}{1})=f(x)$, thus $\tilde{f}$ extends $f$. In particular, $\tilde{f}(\frac{0}{1})=0$ and $\tilde{f}(\frac{1}{1})=1$.

Thus we may identify the two spectra via $\Delta(j)$ as sets. Since for $t\in T$ the map $\ev_t$ vanishes nowhere, the topology generated by the maps $(\ev_s)_{s\in S}$ and $(\frac{\ev_s}{\ev_t})_{s\in S,t\in T}$ is the same, therefore $\Delta(j)$ is a homeomorphism.
\end{proof}

Our goal in the following is to relate the spectral points of a semiring to those of a subsemiring. We will make use of a relaxed preorder whose restriction to the subsemiring is total.
\begin{definition}
Let $(S,\preorderle)$ be a preordered semiring, $S_0\le S$ a subsemiring and $f\in\Delta(S_0,\preorderle)$. We define the relation $\preorderle_f:=\preorderle_{R_f}$ (see Definition~\ref{def:Rpreorderdef}) where $R_f$ is the relation
\begin{equation}
R_f=\setbuild{(x,y)\in S_0\times S_0}{f(x)\le f(y)}\subseteq S\times S.
\end{equation}
\end{definition}
Since $\preorderle\subseteq\preorderle_f$ by Lemma~\ref{lem:Rpreorderproperties}, we may identify $\Delta(S,\preorderle_f)$ with a subset of $\Delta(S,\preorderle)$.
\begin{lemma}\label{lem:inclusionpreimage}
Let $(S,\preorderle)$ be a preordered semiring, $S_0\le S$ a subsemiring and $f\in\Delta(S_0,\preorderle)$. Let $i:S_0\hookrightarrow S$ be the inclusion. Then $\Delta(S,\preorderle_f)=\Delta(i)^{-1}(f)$.
\end{lemma}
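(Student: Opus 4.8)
The plan is to prove the two inclusions separately, in each case simply unwinding the definitions of $\preorderle_f=\preorderle_{R_f}$ (Definition~\ref{def:Rpreorderdef}) and of $\Delta(i)$; recall that $\Delta(i)^{-1}(f)$ is the set of $g\in\Delta(S,\preorderle)$ with $g\circ i=f$, i.e.\ with $g|_{S_0}=f$, and that $\Delta(S,\preorderle_f)$ is a subset of $\Delta(S,\preorderle)$ since $\preorderle\subseteq\preorderle_f$.

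For the inclusion $\Delta(i)^{-1}(f)\subseteq\Delta(S,\preorderle_f)$, I would take $g\in\Delta(S,\preorderle)$ with $g|_{S_0}=f$ and check that $g$ is monotone for $\preorderle_f$. If $a\preorderle_f b$, then by Definition~\ref{def:Rpreorderdef} there are $s_1,\dots,s_n\in S$ and pairs $(x_i,y_i)\in R_f$ (so $x_i,y_i\in S_0$ and $f(x_i)\le f(y_i)$) with $a+\sum_i s_iy_i\preorderle b+\sum_i s_ix_i$. Applying the $\preorderle$-monotone homomorphism $g$ gives $g(a)+\sum_i g(s_i)g(y_i)\le g(b)+\sum_i g(s_i)g(x_i)$; now $g(x_i)=f(x_i)\le f(y_i)=g(y_i)$ because $x_i,y_i\in S_0$, and $g(s_i)\ge 0$, so the right-hand sum is $\le\sum_i g(s_i)g(y_i)$, and cancelling this finite nonnegative quantity from both sides yields $g(a)\le g(b)$. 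This step is pure bookkeeping.

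For the reverse inclusion $\Delta(S,\preorderle_f)\subseteq\Delta(i)^{-1}(f)$, take $g\in\Delta(S,\preorderle_f)$; the real content is to show $g|_{S_0}=f$, not merely that $g|_{S_0}$ respects the preorder induced by $f$ on $S_0$. Since $R_f\subseteq\preorderle_f$ by Lemma~\ref{lem:Rpreorderproperties}\ref{it:RpreordercontainsR} and $g$ is $\preorderle_f$-monotone, we have $f(a)\le f(b)\implies g(a)\le g(b)$ for all $a,b\in S_0$. Fix $x\in S_0$. As $S_0$ contains the canonical image of $\mathbb{N}$ and semiring homomorphisms fix it, for all $p,q\in\mathbb{N}$ with $q\ge 1$ the elements $qx$ and $p$ lie in $S_0$ with $f(qx)=qf(x)$, $f(p)=p$, $g(qx)=qg(x)$, $g(p)=p$. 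Hence $qf(x)\le p$ implies $qg(x)\le p$, and $p\le qf(x)$ implies $p\le qg(x)$; letting $p/q$ range over rationals $\ge f(x)$ and $\le f(x)$ respectively and using density of $\mathbb{Q}$ in $\mathbb{R}$ squeezes $g(x)=f(x)$. Thus $g|_{S_0}=f$, i.e.\ $g\in\Delta(i)^{-1}(f)$.

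The main obstacle is this last squeeze argument: one must rule out the a priori possibility that $g|_{S_0}$ is an $f$-order-preserving homomorphism strictly different from $f$ (which happens, e.g., for order-preserving homomorphisms between totally ordered semirings in general); the point is that agreement on $\mathbb{N}$ together with $\mathbb{Q}$-density pins $g|_{S_0}$ down to $f$. Everything else is a direct expansion of the definition of $\preorderle_R$.
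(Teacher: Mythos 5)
Your proposal is correct and follows essentially the same route as the paper: the easy inclusion is handled identically (apply the homomorphism to the defining inequality of $\preorderle_f$ and use nonnegativity of the $g(s_i)(g(y_i)-g(x_i))$ terms), and your rational squeeze comparing $qx$ with naturals $p$ via $R_f$ is the same sandwich argument the paper runs with $\lfloor f(nx)\rfloor\preorderle_f nx\preorderle_f\lceil f(nx)\rceil$, dividing by $n$ and taking limits.
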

\begin{proof}
Let $\tilde{f}\in\Delta(S,\preorderle_f)$ and $x\in S_0$. For all $n\in\mathbb{N}$ we have $\lfloor f(nx)\rfloor\le f(nx)\le\lceil f(nx)\rceil$, therefore by \ref{it:RpreordercontainsR} of Lemma~\ref{lem:Rpreorderproperties} also
\begin{equation}
\lfloor f(nx)\rfloor\preorderle_f nx\preorderle_f\lceil f(nx)\rceil.
\end{equation}
Apply $\tilde{f}$ to both sides, divide by $n$ and let $n\to\infty$ to get
\begin{equation}
f(x)=\lim_{n\to\infty}\frac{\lfloor f(nx)\rfloor}{n}\le\tilde{f}(x)\le\lim_{n\to\infty}\frac{\lceil f(nx)\rceil}{n}=f(x).
\end{equation}
Therefore $\tilde{f}$ agrees with $f$ on $S_0$, which means $\tilde{f}\in\Delta(i)^{-1}(f)$.

Let $\tilde{f}\in\Delta(i)^{-1}(f)$. Then $\tilde{f}$ is a semiring homomorphism and we need to show that it is monotone with respect to $\preorderle_f$. Let $a\preorderle_f b$. This means there are $x_1,\ldots,x_n,y_1,\ldots,y_n\in S_0$ and $s_1,\ldots,s_n\in S$ for some $n\in\mathbb{N}$ such that $\forall j\in[n]:f(x_j)\le f(y_j)$ and
\begin{equation}
a+s_1y_1+\cdots+s_ny_n\preorderle b+s_1x_1+\cdots+s_nx_n.
\end{equation}
Apply $\tilde{f}$ to both sides and rearrange as
\begin{equation}
\tilde{f}(s_1)\left(\tilde{f}(y_1)-\tilde{f}(x_1)\right)+\cdots+\tilde{f}(s_n)\left(\tilde{f}(y_n)-\tilde{f}(x_n)\right)\le \tilde{f}(b)-\tilde{f}(a).
\end{equation}
Since
\begin{equation}
\tilde{f}(y_j)-\tilde{f}(x_j)=f(y_j)-f(x_j)\ge 0
\end{equation}
for all $j$, this implies $\tilde{f}(b)\ge\tilde{f}(a)$, i.e. $\tilde{f}$ is monotone with respect to $\preorderle_f$.
\end{proof}

\begin{lemma}\label{lem:Rpreorderspectrum}
Let $(S,\preorderle)$ be a preordered semiring and $S_0\le S$ a subsemiring such that $\forall s\in S\setminus\{0\}\exists r,q\in S_0$ such that $1\preorderle rs\preorderle q$. Let $R\subseteq S_0\times S_0\subseteq S\times S$ be an arbitrary relation. Then
\begin{equation}
\Delta(S_0,\left.\preorderle_R\right|_{S_0})=\setbuild{f\in\Delta(S_0,\left.\preorderle\right|_{S_0})}{\forall(x,y)\in R:f(x)\le f(y)}.
\end{equation}
\end{lemma}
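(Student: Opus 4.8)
The plan is to prove the two inclusions separately. The inclusion ``$\subseteq$'' is immediate: any $f\in\Delta(S_0,\left.\preorderle_R\right|_{S_0})$ is monotone for the smaller preorder $\left.\preorderle\right|_{S_0}$ by \ref{it:Rpreorderlarger} of Lemma~\ref{lem:Rpreorderproperties}, and since $R\subseteq\preorderle_R$ by \ref{it:RpreordercontainsR} of Lemma~\ref{lem:Rpreorderproperties} and $R\subseteq S_0\times S_0$, it satisfies $f(x)\le f(y)$ for every $(x,y)\in R$. For ``$\supseteq$'', fix $f$ in the right-hand side; the content is that $a\preorderle_R b$ forces $f(a)\le f(b)$ for $a,b\in S_0$. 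By \ref{it:Rpreorderfinite} of Lemma~\ref{lem:Rpreorderproperties} one may assume $R$ finite, and then, peeling off one pair at a time via \ref{it:Rpreorderunion} of Lemma~\ref{lem:Rpreorderproperties}, reduce to $R=\{(x_0,y_0)\}$ a single pair (at the inductive step the one-pair statement is applied with the enlarged preorder generated by the pairs already removed in place of $\preorderle$; it still satisfies the hypothesis of the lemma, and $f$ is monotone for it by the inductive hypothesis).

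For a single pair, $a\preorderle_{\{(x_0,y_0)\}}b$ unwinds (merging the coefficients of the repeated pair) to $a+sy_0\preorderle b+sx_0$ for some $s\in S$. If $s=0$ this reads $a\preorderle b$, hence $f(a)\le f(b)$ since $a,b\in S_0$; so assume $s\neq0$ and choose $r,q\in S_0\setminus\{0\}$ with $1\preorderle rs\preorderle q$ (neither can be $0$, since $1\preorderle 0$ would contradict the existence of $f$). Multiplying $a+sy_0\preorderle b+sx_0$ by $r$ and using $1\preorderle rs$ on the left and $rs\preorderle q$ on the right gives the inequality $ra+y_0\preorderle rb+qx_0$ between elements of $S_0$; applying $f$ (which is $>0$ on $S_0\setminus\{0\}$, as $1\preorderle rs$ shows after applying $f$) yields $f(r)f(a)+f(y_0)\le f(r)f(b)+f(q)f(x_0)$. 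When $f(x_0)=0$ this already gives $f(a)\le f(b)$.

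The case $f(x_0)>0$ is where I expect the real difficulty: the term $f(q)f(x_0)$ must be removed. My intended approach is to push the offending $x_0$ into a high power: by induction on $N$ one obtains $a y_0^{N-1}+s y_0^{N}\preorderle b\bigl(\sum_{k=0}^{N-1}x_0^k y_0^{N-1-k}\bigr)+s x_0^{N}$ (each step multiplies the original inequality by $x_0^{N}$ to rewrite the term $s x_0^{N}y_0$), then, multiplying by $r$ and sandwiching exactly as above, lands in $S_0$, applies $f$, divides by $f(y_0)^{N-1}$ and lets $N\to\infty$. The obstacle is that the straightforward estimate of the coefficient of $b$ yields only the lossy bound $f(a)\le\frac{f(y_0)}{f(y_0)-f(x_0)}f(b)$, and sharpening the constant to $1$ — which the lemma guarantees is possible — appears to need a more careful iteration. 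A cleaner route sidesteps this altogether: with $R_f:=\setbuild{(x,y)\in S_0\times S_0}{f(x)\le f(y)}$ one has $R\subseteq R_f$, hence $\preorderle_R\subseteq\preorderle_{R_f}=\preorderle_f$ by \ref{it:Rpreorderunion} of Lemma~\ref{lem:Rpreorderproperties}, so it suffices to extend $f$ to a monotone homomorphism $\tilde f\colon S\to\mathbb{R}_{\ge 0}$; then $\tilde f\in\Delta(S,\preorderle_f)$ by Lemma~\ref{lem:inclusionpreimage}, hence $\tilde f\in\Delta(S,\preorderle_R)$, and $f=\tilde f|_{S_0}\in\Delta(S_0,\left.\preorderle_R\right|_{S_0})$. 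On this route the sandwiching condition $1\preorderle rs\preorderle q$ confines the value $\tilde f$ must assign to each nonzero element to a nonempty compact interval, so a Zorn's lemma construction (extending one generator at a time, with a finite-intersection argument for consistency inside these intervals) produces $\tilde f$, and establishing that extension is then the genuine content.
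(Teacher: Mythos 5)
Your reduction to a single pair $(x_0,y_0)$ (finiteness via \ref{it:Rpreorderfinite}, peeling off pairs via \ref{it:Rpreorderunion}, keeping the enlarged preorder $\preorderle_{R_1}$ at the inductive step) matches the paper exactly, and the easy inclusion is fine. The genuine gap is precisely the step you flag: removing the term $f(q)f(x_0)$ when $f(x_0)>0$. Your amplification keeps the coefficient $y_0^{N-1}$ on $a$ but the full sum $\sum_k x_0^k y_0^{N-1-k}$ on $b$, which is why you only get the lossy constant $\tfrac{f(y_0)}{f(y_0)-f(x_0)}$ (and nothing at all when $f(x_0)=f(y_0)$); moreover the asymmetric inequality you write is not clearly derivable without subtraction. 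The paper's trick is to keep the \emph{same} symmetric coefficient on both sides: by induction on $n$ one shows
\begin{equation*}
a\Bigl(\sum_{m=0}^{n}x_0^{m}y_0^{n-m}\Bigr)+sy_0^{n+1}\;\preorderle_{R_1}\;b\Bigl(\sum_{m=0}^{n}x_0^{m}y_0^{n-m}\Bigr)+sx_0^{n+1},
\end{equation*}
then multiplies by $r$, sandwiches $1\preorderle rs\preorderle q$, applies $f$, and divides by the common coefficient $f(r)\sum_{m}f(x_0)^mf(y_0)^{n-m}\ge f(r)(n+1)f(x_0)^n$. Since $f(x_0)\le f(y_0)$ and $f(q)\ge 1$, the error is bounded by $\tfrac{(f(q)-1)f(x_0)}{f(r)}\cdot\tfrac{1}{n+1}\to 0$, giving $f(a)\le f(b)$ exactly. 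This equal-coefficient amplification is the missing idea.

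Your proposed ``cleaner route'' does not rescue the argument. Extending $f\in\Delta(S_0,\left.\preorderle\right|_{S_0})$ to some $\tilde f\in\Delta(S,\preorderle)$ is, in this paper, Proposition~\ref{prop:surjective}, whose proof \emph{uses} Lemma~\ref{lem:Rpreorderspectrum} (to get $\Delta(S_0,\left.\preorderle_f\right|_{S_0})=\{f\}$ and hence the order embedding of $\mathbb{N}$ needed for Theorem~\ref{thm:Strassen}); invoking such an extension here is circular relative to the paper's development. It also needs hypotheses the lemma does not have: the lemma assumes no power universal element, while the known extension result requires polynomial growth. Finally, the Zorn/finite-intersection sketch is not a proof: when adjoining one element, the admissible values are constrained by all polynomial inequalities it satisfies jointly with $S_0$, and showing this constraint set is a nonempty interval is essentially the same difficulty as the amplification argument (or Strassen's theorem) itself, not a routine compactness argument. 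So the proposal, as it stands, leaves the key step unproved.
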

We emphasize that $\left.\preorderle_R\right|_{S_0}$ is in general not the same preorder as $(\left.\preorderle\right|_{S_0})_R$, since the latter would only allow elements $s_1,\ldots,s_n\in S_0$ in Definition~\ref{def:Rpreorderdef}.
\begin{proof}
First note that the condition $\forall s\in S\setminus\{0\}\exists r,q\in S_0$ such that $1\preorderle rs\preorderle q$ implies that every $f\in\Delta(S_0,\left.\preorderle\right|_{S_0})$ and $s\in S_0\setminus\{0\}$ satisfies $f(s)\neq 0$. To see this choose $r\in S_0$ such that $1\preorderle rs$ and apply $f$ to both sides.

We prove $\Delta(S_0,\left.\preorderle_R\right|_{S_0})\subseteq\setbuild{f\in\Delta(S_0,\left.\preorderle\right|_{S_0})}{\forall(x,y)\in R:f(x)\le f(y)}$. The preorders satisfy $\preorderle\subseteq\preorderle_R$, therefore $\Delta(S_0,\left.\preorderle_R\right|_{S_0})\subseteq\Delta(S_0,\left.\preorderle\right|_{S_0})$. Let $f\in\Delta(S_0,\left.\preorderle_R\right|_{S_0})$ and $(x,y)\in R$. \ref{it:RpreordercontainsR} of Lemma~\ref{lem:Rpreorderproperties} implies that $x\preorderle_R y$, therefore $f(x)\le f(y)$.

We prove $\Delta(S_0,\left.\preorderle_R\right|_{S_0})\supseteq\setbuild{f\in\Delta(S_0,\left.\preorderle\right|_{S_0})}{\forall(x,y)\in R:f(x)\le f(y)}$. By \ref{it:Rpreorderfinite} of Lemma~\ref{lem:Rpreorderproperties} it is enough to prove the statement for $|R|<\infty$. We prove by induction on $|R|$. If $R=\emptyset$ then $\preorderle_R=\preorderle$ and there is nothing to prove. Otherwise choose $(x,y)\in R$ and let $R_1=R\setminus\{(x,y)\}$ and $R_2=\{(x,y)\}$. Suppose that
\begin{equation}
\begin{split}
f
 & \in\setbuild{f'\in\Delta(S_0,\left.\preorderle\right|_{S_0})}{\forall(x',y')\in R:f'(x')\le f'(y')}  \\
 & =\setbuild{f'\in\Delta(S_0,\left.\preorderle_{R_1}\right|_{S_0})}{f'(x)\le f'(y)}
\end{split}
\end{equation}
Let $a,b\in S_0$ such that $a\preorderle_Rb$. This means
\begin{equation}\label{eq:basecase}
a+sy\preorderle_{R_1}b+sx
\end{equation}
for some $s\in S$ (\ref{it:Rpreorderunion} of Lemma~\ref{lem:Rpreorderproperties}). If $sx=sy=0$ then apply $f$ to both sides to get $f(a)\le f(b)$. Otherwise $s\neq 0$ and at least one of $x,y$ is nonzero. We prove
\begin{equation}\label{eq:toprove}
a\left(\sum_{m=0}^n x^my^{n-m}\right)+sy^{n+1}\preorderle_{R_1} b\left(\sum_{m=0}^n x^my^{n-m}\right)+sx^{n+1}
\end{equation}
by induction on $n$. The $n=0$ base case is \eqref{eq:basecase}. Suppose \eqref{eq:toprove} holds with $n-1$ instead of $n$. Then
\begin{equation}
\begin{split}
a\left(\sum_{m=0}^n x^my^{n-m}\right)+sy^{n+1}
 & = ax^n+y\left[a\left(\sum_{m=0}^{n-1} x^my^{n-1-m}\right)+sy^{n}\right]  \\
 & \preorderle_{R_1} ax^n+y\left[b\left(\sum_{m=0}^{n-1} x^my^{n-1-m}\right)+sx^{n}\right]  \\
 & = x^n(a+sy)+b\left(\sum_{m=0}^{n-1} x^my^{n-m}\right)  \\
 & \preorderle_{R_1} x^n(b+sx)+b\left(\sum_{m=0}^{n-1} x^my^{n-m}\right)  \\
 & = b\left(\sum_{m=0}^n x^my^{n-m}\right)+sx^{n+1},
\end{split}
\end{equation}
where the first inequality uses the induction hypothesis and the second one uses \eqref{eq:basecase}.

Let $r,q\in S_0$ such that $1\preorderle rs\preorderle q$. Then
\begin{equation}
ra\left(\sum_{m=0}^n x^my^{n-m}\right)+y^{n+1}\preorderle rb\left(\sum_{m=0}^n x^my^{n-m}\right)+qx^{n+1}.
\end{equation}
Apply $f$ to both sides and rearrange to get
\begin{equation}
f(r)\left(\sum_{m=0}^n f(x)^m f(y)^{n-m}\right)\left(f(a)-f(b)\right)\le f(q)f(x)^{n+1}-f(y)^{n+1}
\end{equation}

Divide by the coefficient of $f(a)-f(b)$ (nonzero since $x$ or $y$ is nonzero and $r\neq 0$). Then we use $f(x)\le f(y)$ and $f(q)\ge 1$ to get
\begin{equation}
\begin{split}
f(a)-f(b)
 & \le \frac{f(q)f(x)^{n+1}-f(y)^{n+1}}{f(r)\left(\sum_{m=0}^n f(x)^m f(y)^{n-m}\right)}  \\
 & \le \frac{(f(q)-1)f(x)^{n+1}}{f(r)\left(\sum_{m=0}^n f(x)^m f(y)^{n-m}\right)}  \\
 & \le \frac{(f(q)-1)f(x)^{n+1}}{f(r)(n+1)f(x)^n}  \\
 & = \frac{(f(q)-1)f(x)}{f(r)}\frac{1}{n+1}.
\end{split}
\end{equation}
This inequality is true for every $n\in\mathbb{N}$, therefore $f(a)\le f(b)$.
\end{proof}

\begin{proposition}\label{prop:surjective}
Let $(S,\preorderle)$ be a preordered semiring of polynomial growth and $S_0\le S$ a subsemiring satisfying $\forall s\in S\setminus\{0\}\exists r,q\in S_0$ such that $1\preorderle rs\preorderle q$. Let $i:S_0\hookrightarrow S$ be the inclusion. Then $\Delta(i)$ is surjective.
\end{proposition}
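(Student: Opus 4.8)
The plan is to deduce the proposition from Strassen's theorem (Theorem~\ref{thm:Strassen}), applied not to $\preorderle$ itself but to the enlarged preorder $\preorderle_f$ attached to a spectral point $f$ of $S_0$. It suffices to show that for every $f\in\Delta(S_0,\left.\preorderle\right|_{S_0})$ the fibre $\Delta(i)^{-1}(f)$ is nonempty (if $\Delta(S_0,\left.\preorderle\right|_{S_0})=\emptyset$ there is nothing to prove). By Lemma~\ref{lem:inclusionpreimage} this fibre is exactly $\Delta(S,\preorderle_f)$, and $(S,\preorderle_f)$ is a preordered semiring by \ref{it:Rpreordersemiring} of Lemma~\ref{lem:Rpreorderproperties}; it is of polynomial growth since $\preorderle\subseteq\preorderle_f$. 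So the steps are: (1) reduce to $\Delta(S,\preorderle_f)\neq\emptyset$; (2) show $\mathbb{N}\hookrightarrow(S,\preorderle_f)$ is an order embedding; (3) show $2$ is power universal for $(S,\preorderle_f)$; (4) invoke Theorem~\ref{thm:Strassen}. The point is that passing from $\preorderle$ to $\preorderle_f$ converts the two hypotheses of the proposition into the two hypotheses of Theorem~\ref{thm:Strassen}.

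For step (2), I would apply Lemma~\ref{lem:Rpreorderspectrum} with $R=R_f$ — its hypothesis on $S_0$ is precisely the sandwiching condition assumed here — to get $f\in\Delta(S_0,\left.\preorderle_f\right|_{S_0})$, i.e. $f$ is monotone for $\preorderle_f$ on $S_0$; combined with $R_f\subseteq\preorderle_f$ (\ref{it:RpreordercontainsR} of Lemma~\ref{lem:Rpreorderproperties}) and $\left.f\right|_{\mathbb{N}}=\mathrm{id}$ this gives $m\preorderle_f n\iff f(m)\le f(n)\iff m\le n$ for $m,n\in\mathbb{N}\subseteq S_0$. The same application of Lemma~\ref{lem:Rpreorderspectrum} records that $f(s)>0$ for every $s\in S_0\setminus\{0\}$ and that any natural number $\ge f(s)$ (resp. $\le f(s)$) is a $\preorderle_f$-upper (resp. lower) bound of $s$.

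Step (3) is where the real work lies, and is the step I expect to be the main obstacle. Let $u$ be power universal for $\preorderle$; then it is power universal for $\preorderle_f$ too, and $u\neq 0$ (else $1\preorderle_f 0$, forcing $1=f(1)\le f(0)=0$). Applying the sandwiching condition to $s=u$ yields $r,q\in S_0$ with $1\preorderle ru\preorderle q$, so $r\neq 0$, $q\neq 0$, $f(r)>0$ and $f(q)\ge 1$. Choosing $N\in\mathbb{N}$ with $Nf(r)\ge 1$ we have $1\preorderle_f Nr$, hence
\[
u\preorderle_f (Nr)u=N(ru)\preorderle_f Nq\preorderle_f N\lceil f(q)\rceil\preorderle_f 2^{m_0}
\]
for any $m_0$ with $2^{m_0}\ge N\lceil f(q)\rceil$. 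Since also $2\preorderge_f 1$, the remark following the definition of power universal (``$u'\preorderge 1$ and $u\preorderle (u')^k$ imply $u'$ is power universal'') shows that $2$ is power universal for $\preorderle_f$. Now Theorem~\ref{thm:Strassen} applies to $(S,\preorderle_f)$ and yields $\Delta(S,\preorderle_f)\neq\emptyset$, i.e. $\Delta(i)^{-1}(f)\neq\emptyset$; since $f$ was arbitrary, $\Delta(i)$ is surjective. (If one prefers not to invoke Theorem~\ref{thm:Strassen}, the alternative is to rerun Strassen's direct construction of a spectral point, with $(S_0,f)$ playing the role of $(\mathbb{N},\mathrm{id})$ and the displayed sandwiching playing the role of power-universality of $2$; the content is the same.)
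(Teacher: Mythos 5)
Your proposal is correct and follows essentially the same route as the paper: reduce to $\Delta(S,\preorderle_f)\neq\emptyset$ via Lemma~\ref{lem:inclusionpreimage}, use Lemma~\ref{lem:Rpreorderspectrum} to see that $f$ is $\preorderle_f$-monotone on $S_0$ (hence $\mathbb{N}\hookrightarrow(S,\preorderle_f)$ is an order embedding), establish that $2$ is power universal for $\preorderle_f$, and invoke Theorem~\ref{thm:Strassen}. The only (harmless) deviation is in how $2$ is shown power universal: the paper first produces $u'\in S_0$ with $u'\succcurlyeq u$ and then dominates $u'$ by a power of $2$ inside $S_0$, whereas you bound $u$ itself via $u\preorderle_f N(ru)\preorderle_f Nq\preorderle_f 2^{m_0}$ using only one application of the sandwiching hypothesis; both arguments are valid.
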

\begin{proof}
We can assume that the inclusion $\mathbb{N}\hookrightarrow S$ is an order embedding (otherwise both spectra are empty).

By Lemma~\ref{lem:inclusionpreimage} it is enough to show that for every $f\in\Delta(S_0,\preorderle)$ the set $\Delta(S,\preorderle_f)$ is nonempty. Let $u\in S$ be power universal. By the assumption on $S_0$ there is an $u'\in S_0$ such that $u'\preorderge u$, and any such $u'$ is also power universal in $S$. By \ref{it:RpreordercontainsR} of Lemma~\ref{lem:Rpreorderproperties},
\begin{equation}
u'\preorderle_f 2^{\lceil\log_2 f(u')\rceil},
\end{equation}
therefore $2$ is also power universal for $(S,\preorderle_f)$.

Lemma~\ref{lem:Rpreorderspectrum} implies that $\Delta(S_0,\left.\preorderle_f\right|_{S_0})=\{f\}$. In particular, for $n,m\in\mathbb{N}$ we have $n\preorderle_f m$ iff $n\le m$. Therefore we can apply Theorem~\ref{thm:Strassen} to the preordered semiring $(S,\preorderle_f)$ and conclude $\Delta(S,\preorderle_f)\neq\emptyset$.
\end{proof}

\section{Proof of main result and uniqueness}\label{sec:mainproof}

Now we have all the technical tools to prove Theorem~\ref{thm:main}. In the setting of that theorem, we introduce the following notations:
\begin{align}
S_+ & = \setbuild{s\in S}{\exists n\in\mathbb{N}:ns\preorderge 1}\cup\{0\}  \\
S_- & = \setbuild{s\in S}{\exists n\in\mathbb{N}:n\preorderge s}  \\
S_b & = S_+\cap S_-.
\end{align}
We will see that all three are subsemirings, and the definition of $S_b$ ensures that it satisfies the conditions of Theorem~\ref{thm:Strassen}. Given a pair of elements in $S$, we can multiply both with the same element of $M$ to get elements of $S_-$, at least one of them in $S_b$. Here Strassen's theorem ensures that the collection of monotone homomorphisms characterize the asymptotic preorder. An essential part of the proof is to show that most spectral points of $S_b$ admit an extension to $S$. Informally, an extension exists unless it would evaluate to $\infty$ on the power universal element. More precisely, by assumption~\ref{it:invertibleuptobounded} we can form an ``inverse up to bounded elements'' $\bar{u}$ of $u$, and $f\in\Delta(S_b,\preorderle)$ extends to $S$ iff $f(1+\bar{u})>1$.

\begin{lemma}\label{lem:plusminusproperties}
\leavevmode
\begin{enumerate}[(i)]
\item\label{it:plusminussubsemiring} $S_+$, $S_-$ and $S_b$ are subsemirings of $S$.
\item\label{it:boundedtominusextension} Let $i:S_b\to S_-$ denote the inclusion. Then $\Delta(i):\Delta(S_-,\preorderle)\to\Delta(S_b,\preorderle)$ is a homeomorphism.
\end{enumerate}
\end{lemma}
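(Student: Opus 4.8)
The plan is to prove part~\ref{it:plusminussubsemiring} by direct verification and part~\ref{it:boundedtominusextension} via the single structural fact that $s+1\in S_b$ whenever $s\in S_-$. For part~\ref{it:plusminussubsemiring}: each of $S_+,S_-,S_b$ visibly contains $0$ and $1$ (for $S_+$ because $1\cdot 1\preorderge 1$). Closure of $S_-$ is immediate from compatibility of $\preorderle$ with $+$ and $\cdot$: from $n\preorderge s$ and $m\preorderge t$ we get $n+m\preorderge s+t$ and, chaining $st\preorderle nt\preorderle nm$, also $nm\preorderge st$. For $S_+$: if $ns\preorderge 1$ and $mt\preorderge 1$ then $(nm)(st)=(ns)(mt)\preorderge 1$; and since $0\preorderle s$ for every $s$ (multiply $0\preorderle 1$ by $s$), we have $n(s+t)=ns+nt\preorderge ns\preorderge 1$ when $s\ne 0$, the cases with a zero summand or factor being trivial. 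Finally $S_b=S_+\cap S_-$ is a subsemiring as an intersection of subsemirings.

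For part~\ref{it:boundedtominusextension}, I would first record that $s\in S_-$ implies $s+1\in S_b$: here $0\preorderle s$ gives $s+1\preorderge 1$, hence $1\cdot(s+1)\preorderge 1$ and $s+1\in S_+$, while $s+1\in S_-$ because $S_-$ is additively closed. It follows that any $g\in\Delta(S_-,\preorderle)$ satisfies $g(s)=g(s+1)-g(1)=g(s+1)-1$, an expression depending only on $g|_{S_b}=\Delta(i)(g)$, so $\Delta(i)$ is injective. For surjectivity, given $f\in\Delta(S_b,\preorderle)$ I would set $\tilde f(s):=f(s+1)-1$ for $s\in S_-$ and check that $\tilde f\in\Delta(S_-,\preorderle)$ with $\Delta(i)(\tilde f)=f$: nonnegativity comes from $f(s+1)\ge f(1)=1$; the values $\tilde f(0)=0$ and $\tilde f(1)=1$ are direct; monotonicity follows from that of $f$ together with $s\preorderle s'\Rightarrow s+1\preorderle s'+1$; $\tilde f$ restricts to $f$ on $S_b$ because $f(s+1)=f(s)+1$ there; and additivity and multiplicativity reduce, after expanding $(s+1)+(s'+1)$ and $(s+1)(s'+1)$ and using $f(1)=1$, to short algebraic identities (the key one being $f(s+1)+f(s'+1)=f(s+s')+2$).

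To finish, $\Delta(i)$ is continuous by Lemma~\ref{lem:spectrumproperties}, so it remains to note that its inverse $f\mapsto\tilde f$ is continuous: for each $s\in S_-$ we have $\ev_s\circ\Delta(i)^{-1}=\ev_{s+1}-1$ with $s+1\in S_b$, and since $\Delta(S_-,\preorderle)$ carries the initial topology with respect to the maps $\ev_s$, this makes $\Delta(i)$ a homeomorphism. The only step needing any care is the multiplicativity of $\tilde f$, which after expansion is a finite calculation using additivity of $f$ and $f(1)=1$; I expect it to be bookkeeping rather than a real obstacle, and part~\ref{it:plusminussubsemiring} together with the continuity arguments to be entirely routine.
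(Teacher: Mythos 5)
Your proof is correct and follows essentially the same route as the paper: direct verification that $S_+$, $S_-$ and $S_b$ are subsemirings, the key observation that $x\in S_-$ implies $1+x\in S_b$, the formula $\tilde f(x)=f(1+x)-1$ driving both injectivity and surjectivity of $\Delta(i)$, and the initial-topology (pointwise convergence) argument for the homeomorphism. The only blemish is your stated ``key identity,'' which should read $f(s+1)+f(s'+1)=f(s+s'+1)+1$ rather than $f(s+1)+f(s'+1)=f(s+s')+2$, since $s+s'$ need not lie in $S_b$ and so $f(s+s')$ need not be defined; this is a notational slip, not a gap.
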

\begin{proof}
\ref{it:plusminussubsemiring}: $0$ and $1$ are clearly contained both in $S_+$ and $S_-$. Let $s_1,s_2\in S_+$. This means that there are $n_1,n_2\in\mathbb{N}$ such that $n_1s_1\preorderge 1$ and $n_2s_2\preorderge 1$. Then $n_1(s_1+s_2)\preorderge n_1s_1\preorderge 1$ and $n_1n_2(s_1s_2)\preorderge 1\cdot 1=1$, therefore $s_1+s_2\in S_+$ and $s_1s_2\in S_+$. Let $s_1,s_2\in S_-$. This means that there are $n_1,n_2\in\mathbb{N}$ such that $n_1\preorderge s_1$ and $n_2\preorderge s_2$. Then $n_1+n_2\preorderge s_1+s_2$ and $n_1n_1\preorderge s_1s_2$, therefore $s_1+s_2\in S_-$ and $s_1s_2\in S_-$. $S_b=S_+\cap S_-$ is the intersection of subsemirings, therefore it is also a subsemiring.

\ref{it:boundedtominusextension}: We prove that $\Delta(i)$ is injective. Let $\tilde{f}\in\Delta(S_-,\preorderle)$. If $x\in S_-$ then $1+x\in S_b$ and therefore $\tilde{f}(x)=\tilde{f}(1+x)-1=\Delta(i)(\tilde{f})(1+x)-1$, so $\tilde{f}$ can be reconstructed from its restriction $\Delta(i)(\tilde{f})$.

We prove that $\Delta(i)$ is surjective. Let $f\in\Delta(S_b,\preorderle)$ and let $\tilde{f}:S_-\to\mathbb{R}$ be defined as $\tilde{f}(x)=f(1+x)-1$. Since $1\preorderle 1+x$, we have $\tilde{f}(x)\ge f(1)-1=0$. We show that $\tilde{f}$ is a monotone semiring homomorphism and $\Delta(i)(\tilde{f})=f$. Clearly $\tilde{f}(0)=f(1+0)-1=1-1=1$ and $\tilde{f}(1)=f(1+1)-1=2-1=1$.

We prove additivity.
\begin{equation}
\begin{split}
\tilde{f}(x+y)
 & = f(1+x+y)-1  \\
 & = f(1+x+1+y)-1-1  \\
 & = f(1+x)-1+f(1+y)-1  \\
 & = \tilde{f}(x)+\tilde{f}(y)
\end{split}
\end{equation}

We prove multiplicativity.
\begin{equation}
\begin{split}
\tilde{f}(xy)
 & = f(1+xy)-1  \\
 & = f(1+x+1+y+1+xy)-f(1+x)-f(1+y)-1  \\
 & = f(1+x+y+xy)-f(1+x)-f(1+y)+1  \\
 & = f((1+x)(1+y))-f(1+x)-f(1+y)+1  \\
 & = (f(1+x)-1)(f(1+y)-1)  \\
 & = \tilde{f}(x)\tilde{f}(y)
\end{split}
\end{equation}

We prove that $\tilde{f}$ is monotone. Let $x,y\in S_-$ and suppose that $x\preorderle y$. Then $1+x\preorderle 1+y$, therefore
\begin{equation}
\tilde{f}(x)=f(1+x)\le f(1+y)=\tilde{f}(y).
\end{equation}

If $x\in S_b$ then $\tilde{f}(x)=f(1+x)-1=f(1)+f(x)-1=f(x)$, so $\Delta(i)(\tilde{f})=f$.

Finally, from the equality $\tilde{f}(x)=\tilde{f}(1+x)-1$ we see that pointwise convergence in $\Delta(S_-,\preorderle)$ is equivalent to pointwise convergence of the restrictions to $S_b$.
\end{proof}

\begin{lemma}\label{lem:extensionfromminus}
Let $u$ be power universal in $S$ and suppose that there is a $\bar{u}\in S\setminus\{0\}$ such that $u\bar{u}\in S_b$. Then $\bar{u}\in S_-$ and for any $f\in\Delta(S_-,\preorderle)$ the following are equivalent:
\begin{enumerate}[(i)]
\item\label{it:extends} $f$ has an extension $\tilde{f}:S\to\mathbb{R}_{\ge 0}$ such that $\tilde{f}\in\Delta(S,\preorderle)$
\item\label{it:onminusnonzero} $\forall x\in S_-\setminus\{0\}:f(x)\neq 0$.
\item\label{it:ubarnonzero} $f(\bar{u})\neq 0$
\end{enumerate}
When an extension exists, it is unique.
\end{lemma}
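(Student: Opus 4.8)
First I would dispose of the easy assertions. Since $u\bar u\in S_b\subseteq S_-$, there is $n\in\mathbb N$ with $n\succcurlyeq u\bar u$; combined with $u\succcurlyeq 1$ this gives $n\succcurlyeq u\bar u\succcurlyeq \bar u$, so $\bar u\in S_-$. For the equivalences, the implications \ref{it:extends}$\Rightarrow$\ref{it:ubarnonzero} and \ref{it:onminusnonzero}$\Rightarrow$\ref{it:ubarnonzero} are immediate: if $\tilde f\in\Delta(S,\preorderle)$ extends $f$ then, picking $k$ with $u^k\bar u\succcurlyeq 1$, we get $\tilde f(u)^k\tilde f(\bar u)\ge 1$, hence $f(\bar u)=\tilde f(\bar u)>0$; and $\bar u\in S_-\setminus\{0\}$ makes \ref{it:onminusnonzero}$\Rightarrow$\ref{it:ubarnonzero} trivial. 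For \ref{it:ubarnonzero}$\Rightarrow$\ref{it:onminusnonzero}: given $x\in S_-\setminus\{0\}$, by the polynomial-growth hypothesis (applied in $S$) there is $k$ with $u^k x\succcurlyeq 1$, so $\bar u^k=\bar u^k\cdot 1\preccurlyeq \bar u^k u^k x=(u\bar u)^k x$; applying $f$ (legitimate since $\bar u,(u\bar u)^k x\in S_-$, the latter because $S_-$ is a subsemiring and contains $x$ and the bounded element $u\bar u$) yields $f(\bar u)^k\le f(u\bar u)^k f(x)$, and since $f(\bar u)>0$ we conclude $f(x)>0$.

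The substantive implication is \ref{it:onminusnonzero}$\Rightarrow$\ref{it:extends}. The plan is to localize $S$ at $T=S_-\setminus\{0\}$ (a multiplicative set containing $1$ since $S_-$ is a subsemiring and $\mathbb N\hookrightarrow S$ is an order embedding, so $1\ne 0$), obtaining the canonical map $j\colon S\to T^{-1}S$. Condition \ref{it:onminusnonzero} says exactly that $f$ does not vanish on $T$, so $f$ induces a well-defined homomorphism $\hat f\colon (S_-)_{T}\to\mathbb R_{\ge0}$ on the localization of $S_-$ at $T$ by $\hat f(s/t)=f(s)/f(t)$; but every element of $S_-\setminus\{0\}$ is already invertible there, and in fact I claim $T^{-1}S_-=T^{-1}S$ in the relevant sense. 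Indeed, by condition \ref{it:invertibleuptobounded} every $s\in S\setminus\{0\}$ has $m\in M$, $n\in\mathbb N$ with $ms\preccurlyeq n$ and $1\preccurlyeq nms$; the first inequality puts $ms\in S_-$, and then $s=\frac{ms}{m}$ with $m\in S_-\setminus\{0\}$ (note $m\preccurlyeq n$ for some $n$ as $m\succcurlyeq 1$ would fail in general — but $m\in M\subseteq S$ and from $1\preccurlyeq nms$, $ms\preccurlyeq n$ we at least get $m$ into the localization as a unit since $ms$ is a nonzero element of $S_-$ and $s$ is a unit once... ) — more carefully, I would argue that $j$ factors through the subsemiring generated by $S_-$ and its formal inverses, and on that semiring $\hat f$ is defined and monotone with respect to the localized preorder. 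Composing $\hat f$ with $j$ then gives the desired $\tilde f\colon S\to\mathbb R_{\ge0}$; monotonicity of $\tilde f$ with respect to $\preorderle$ follows from monotonicity of the canonical map (Lemma, part~\ref{it:localizationmapmonotone}) together with monotonicity of $\hat f$. That $\tilde f$ extends $f$ is clear since $\hat f(s/1)=f(s)$ for $s\in S_-$.

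For uniqueness, suppose $\tilde f_1,\tilde f_2\in\Delta(S,\preorderle)$ both restrict to $f$ on $S_-$. Given any $s\in S\setminus\{0\}$, use \ref{it:invertibleuptobounded} to get $m\in M$, $n\in\mathbb N$ with $ms\preccurlyeq n$ and $1\preccurlyeq nms$. Then $ms\in S_-$, and also $m\in S_-$ after noting $m=m\cdot 1\preccurlyeq m\cdot nms\cdot(\text{something})$ — cleanly: from $ms\preccurlyeq n$ we get $m\preccurlyeq m(1)\preccurlyeq\dots$; the robust statement is that both $ms$ and $nms$ lie in $S_-$, that $ms\ne 0$ (else $1\preccurlyeq 0$, contradicting the order embedding), and therefore $\tilde f_k(m)\tilde f_k(s)=\tilde f_k(ms)=f(ms)$ is the same for $k=1,2$ while $\tilde f_k(m)=f(m)$ if $m\in S_-$, or more generally $\tilde f_k(m)>0$ because $nms\succcurlyeq 1$ forces $\tilde f_k(m)>0$; dividing, $\tilde f_1(s)=f(ms)/\tilde f_1(m)=f(ms)/\tilde f_2(m)=\tilde f_2(s)$ once we know $\tilde f_1(m)=\tilde f_2(m)$, which holds since $m\in S_-$ (from $m\succcurlyeq 1$... actually from $ms\preccurlyeq n$ and choosing $s$ appropriately, or directly: $m$ times the unit $s$... ). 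The main obstacle, and the step I expect to require the most care, is precisely pinning down that the elements $m\in M$ arising from \ref{it:invertibleuptobounded} can be treated as units of $S_-$ in the localization — i.e. that $M\subseteq T$ up to the identifications in play, or equivalently that $f(m)$ is forced and nonzero — so that the formula $\tilde f(s)=f(ms)/f(m)$ both makes sense and is forced; once that is secured, existence and uniqueness both drop out of the localization machinery of Section~\ref{sec:asymptoticpreorder} and Lemma~\ref{lem:spectrumlocalization}.
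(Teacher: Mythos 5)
The easy parts of your argument are fine: $\bar u\in S_-$, the implication \ref{it:extends}$\Rightarrow$\ref{it:ubarnonzero}, and \ref{it:ubarnonzero}$\Rightarrow$\ref{it:onminusnonzero} (via $\bar u^k\preccurlyeq(u\bar u)^k x$) are all correct, and they close the cycle of equivalences provided \ref{it:onminusnonzero}$\Rightarrow$\ref{it:extends} is established. But that implication, together with uniqueness, is precisely where the proposal has a genuine gap, and it is the step you yourself flag as unresolved. Your route runs through condition \ref{it:invertibleuptobounded} and tries to use the elements $m\in M$ as denominators: to write $s=\frac{ms}{m}$, to define $\tilde f(s)=f(ms)/f(m)$, and to force $\tilde f_1(m)=\tilde f_2(m)$ in the uniqueness argument. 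None of this goes through, because \ref{it:invertibleuptobounded} only gives $ms\preccurlyeq n$ and $1\preccurlyeq nms$; it does not put $m$ itself into $S_-$ (elements of $M$ that are not bounded are exactly the case condition \ref{it:boundedev} is designed to handle). Hence in general $m\notin T=S_-\setminus\{0\}$, $f(m)$ is undefined and not forced, $\frac{ms}{m}$ is not a legitimate fraction in $T^{-1}S$, and the uniqueness argument becomes circular. Note also that the lemma does not assume \ref{it:invertibleuptobounded} at all; its own hypotheses already provide the correct denominator.

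The fix is to use powers of $\bar u$ instead of elements of $M$: for $x\in S$ pick $k$ with $x\preccurlyeq u^k$; then $\bar u^k x\preccurlyeq(u\bar u)^k\in S_b$, so $\bar u^k x\in S_-$, while $\bar u\in S_-$ and $f(\bar u)\neq 0$. Any extension must therefore satisfy $\tilde f(x)=f(\bar u)^{-k}f(\bar u^k x)$, which gives uniqueness at once, and taking this formula as the definition one verifies independence of $k$, additivity, multiplicativity and monotonicity directly --- this is the paper's proof. Your localization packaging can be made to work as well (localize at $T=S_-\setminus\{0\}$, or just at the powers of $\bar u$, and observe $\frac{s}{1}=\frac{\bar u^k s}{\bar u^k}$ with numerator in $S_-$ and denominator in $T$), but that observation is again exactly the $\bar u^k$-trick, and once you have it the localization machinery adds little beyond the paper's direct construction. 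As written, however, the existence of the extension and its uniqueness --- the substantive content of the lemma --- are not established.
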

\begin{proof}
Since $1\preorderle u$ and $u\bar{u}\in S_b$, there is an $n\in\mathbb{N}$ such that $\bar{u}\preorderle u\bar{u}\preorderle n$, therefore $\bar{u}\in S_-$.

\ref{it:extends}$\implies$\ref{it:onminusnonzero}: Let $x\in S_-\setminus\{0\}$ and choose $k\in\mathbb{N}$ such that $1\preorderle u^kx$. Then $1\le\tilde{f}(u^kx)=\tilde{f}(u)^kf(x)$, therefore $f(x)\neq 0$.

\ref{it:onminusnonzero}$\implies$\ref{it:ubarnonzero}: We have seen that $\bar{u}\in S_-$. It is also nonzero since there is an $n$ such that $1\preorderle nu\bar{u}$.

\ref{it:ubarnonzero}$\implies$\ref{it:extends}: Let $x\in S$. There is a $k\in\mathbb{N}$ such that $x\preorderle u^k$, and therefore $\bar{u}^k x\preorderle (u\bar{u})^k$, which implies $\bar{u}^k x\in S_-$. From this we see that if an extension $\tilde{f}$ exists, it must satisfy $\tilde{f}(x)=f(\bar{u})^{-k}f(\bar{u}^k x)$, which proves uniqueness. We prove that this expression is well defined. If $\bar{u}^{k_1} x\in S_-$ and $\bar{u}^{k_2} x\in S_-$ with $k_1<k_2$ then
\begin{equation}
\begin{split}
f(\bar{u})^{-k_2}f(\bar{u}^{k_2} x)
 & = f(\bar{u})^{-k_2}f(\bar{u}^{k_2-k_1}\bar{u}^{k_1} x)  \\
 & = f(\bar{u})^{-k_2}f(\bar{u}^{k_2-k_1})f(\bar{u}^{k_1} x)=f(\bar{u})^{-k_1}f(\bar{u}^{k_1} x).
\end{split}
\end{equation}
$\tilde{f}$ extends $f$ since for $x\in S_-$ one can take $k=0$ above.

We show that $\tilde{f}$ is monotone. Let $x_1,x_2\in S$, $x_1\preorderle x_2$. Choose $k\in\mathbb{N}$ such that $\bar{u}^kx_1\in S_-$ and $\bar{u}^kx_2\in S_-$. Then $\bar{u}^kx_1\preorderle \bar{u}^kx_2$, therefore
\begin{equation}
\tilde{f}(x_1)=f(\bar{u})^{-k}f(\bar{u}^k x_1)\le f(\bar{u})^{-k}f(\bar{u}^k x_2)=\tilde{f}(x_2).
\end{equation}

We show that $\tilde{f}$ is multiplicative. Let $x_1,x_2\in S$ and choose $k_1,k_2\in\mathbb{N}$ such that $\bar{u}^{k_1}x_1\in S_-$ and $\bar{u}^{k_2}x_2\in S_-$. Then $\bar{u}^{k_1+k_2}x_1x_2\in S_-$ and
\begin{equation}
\begin{split}
\tilde{f}(x_1x_2)
 & = f(\bar{u})^{-{k_1+k_2}}f(\bar{u}^{k_1+k_2}x_1x_2)  \\
 & = f(\bar{u})^{-k_1}f(\bar{u}^{k_1} x)f(\bar{u})^{-k_2}f(\bar{u}^{k_2} x)=\tilde{f}(x_1)\tilde{f}(x_2).
\end{split}
\end{equation}

We show that $\tilde{f}$ is additive. Let $x_1,x_2\in S$ and choose $k_1,k_2\in\mathbb{N}$ such that $\bar{u}^{k_1}x_1\in S_-$ and $\bar{u}^{k_2}x_2\in S_-$. Then $\bar{u}^{k_1+k_2}(x_1+x_2)\in S_-$ and
\begin{equation}
\begin{split}
\tilde{f}(x_1+x_2)
 & =f(\bar{u})^{-{k_1+k_2}}f(\bar{u}^{k_1+k_2}(x_1+x_2))  \\
 & =f(\bar{u})^{-{k_1+k_2}}\left(f(\bar{u}^{k_1+k_2}x_1)+f(\bar{u}^{k_1+k_2}x_2)\right)=\tilde{f}(x_1)+\tilde{f}(x_2).
\end{split}
\end{equation}
\end{proof}
According to Lemma~\ref{lem:plusminusproperties} and Lemma~\ref{lem:extensionfromminus} we may make the identification $\Delta(S,\preorderle)\subseteq\Delta(S_-,\preorderle)=\Delta(S_b,\preorderle)$.

\begin{proof}[Proof of Theorem~\ref{thm:main}]
The implication $x\asymptoticge y\implies\forall f\in\Delta(S,\preorderle):f(x)\ge f(y)$ follows from Lemma~\ref{lem:spectrumofasymptoticpreorder}.

For the other direction, suppose that $x,y\in S$ satisfy $\forall f\in\Delta(S,\preorderle):f(x)\ge f(y)$. Let $m_1\in M$ be such that $m_1y\in S_b\setminus\{0\}$ and let $m_2\in M$ be such that $m_2m_1x\in S_b\setminus\{0\}$. Then for all $f\in\Delta(S,\preorderle)$ we have
\begin{equation}
f(m_2)=\frac{f(m_2m_1x)}{f(m_1)f(x)}\le\frac{f(m_2m_1x)}{f(m_1)f(y)}=\frac{f(m_2m_1x)}{f(m_1y)}.
\end{equation}
The numerator of the right hand side is bounded from above and the denominator is bounded away from $0$, therefore $\ev_{m_2}$ is bounded on $\Delta(S,\preorderle)$. By Proposition~\ref{prop:surjective} it is also bounded on $\Delta(S_0,\preorderle)$, and thus by assumption~\ref{it:boundedev} we have $m_2\in S_-$.

Let $t=m_1m_2$ so that $tx\in S_b$ and $ty\in S_-$ (see \ref{it:plusminussubsemiring} of Lemma~\ref{lem:plusminusproperties}). Let $u$ be power universal and $\bar{u}\in M$ such that $u\bar{u}\in S_b\setminus\{0\}$ (possible by \ref{it:invertibleuptobounded}). Let $k\in\mathbb{N}$ such that $k\preorderge\bar{u}$ (Lemma~\ref{lem:extensionfromminus}) and let
\begin{equation}
\delta=\min_{f\in\Delta(S_b,\preorderle)}f(tx).
\end{equation}
For every $f\in\Delta(S,\preorderle)$ we have
\begin{equation}
\begin{split}
f((k+1)\lceil\delta^{-n}\rceil(tx)^n)
 & \ge f(\bar{u})\lceil\delta^{-n}\rceil f(tx)^n+1  \\
 & \ge f(\bar{u})\lceil\delta^{-n}\rceil f(ty)^n+1  \\
 & = f(\bar{u}\lceil\delta^{-n}\rceil(ty)^n+1),
\end{split}
\end{equation}
whereas for every $f\in\Delta(S_b,\preorderle)\setminus\Delta(S,\preorderle)$ we have $f(\bar{u})=0$ and therefore
\begin{equation}
\begin{split}
f((k+1)\lceil\delta^{-n}\rceil(tx)^n)\ge 1
 & = f(\bar{u})\lceil\delta^{-n}\rceil f(ty)^n+1  \\
 & = f(\bar{u}\lceil\delta^{-n}\rceil(ty)^n+1).
\end{split}
\end{equation}
We apply Theorem~\ref{thm:Strassen} to $S_b$ and infer
\begin{equation}
(k+1)\lceil\delta^{-n}\rceil(tx)^n\asymptoticge \bar{u}\lceil\delta^{-n}\rceil(ty)^n+1\preorderge\bar{u}\lceil\delta^{-n}\rceil(ty)^n,
\end{equation}
The factors $\lceil\delta^{-n}\rceil$ can be cancelled by \ref{it:asymptoticcancellation} of Lemma~\ref{lem:asymptoticproperties} and Lemma~\ref{lem:aasymptotic}. Thus we have $(k+1)(tx)^n\asymptoticge\bar{u}(ty)^n$ for all $n$, which implies $tx\asymptoticge ty$ by \ref{it:asymptoticsmallfactors} of Lemma~\ref{lem:asymptoticproperties}. Finally, we cancel the factors $t$ using \ref{it:asymptoticcancellation} of Lemma~\ref{lem:asymptoticproperties} once more and conclude $x\asymptoticge y$.

$\Delta(S,\preorderle)$ is locally compact and $\ev_u$ is proper on $\Delta(S,\preorderle)$ by Proposition~\ref{prop:polygrowthlocallycompact}.
\end{proof}

\begin{proof}[Proof of Corollary~\ref{cor:main}]
Consider the localization $T^{-1}S$ with its asymptotic preorder (see Lemma~\ref{lem:localizationasymptotic}). Choose $M'=\setbuild{\frac{mt_1}{t_2}}{m\in M,t_1,t_2\in T}$. Then the semiring generated by $M'$ is $\setbuild{\frac{s}{t}}{s\in S_0,t\in T}=T^{-1}S_0$. We use Theorem~\ref{thm:main} with the subset $M'\subseteq T^{-1}S$.

If $\frac{s}{t}\in T^{-1}S\setminus\{0\}$ (with $s\in S\setminus\{0\}$ and $t\in T$) then let $m\in M$, $t_1,t_2\in T$ and $n\in\mathbb{N}$ such that $t_2\preorderle n mt_1s$ and $mt_1s\preorderle n t_2$ as in the condition \ref{it:fracinvertibleuptobounded}. Then $1\preorderle n\frac{mt_1t}{t_2}\frac{s}{t}$ and $\frac{mt_1t}{t_2}\frac{s}{t}\preorderle n$, therefore \ref{it:invertibleuptobounded} is satisfied by $M'$.

Let $\frac{mt_1}{t_2}\in M'$ (with $m\in M$ and $t_1,t_2\in T$) such that $\ev_{\frac{mt_1}{t_2}}$ is bounded on $\Delta(T^{-1}S_0,\asymptoticle)$. Then $\ev_m\frac{\ev_{t_1}}{\ev_{t_2}}$ is bounded on $\Delta(S_0,\preorderle)$ (see Lemma~\ref{lem:spectrumofasymptoticpreorder} and Lemma~\ref{lem:spectrumlocalization}), so by \ref{it:fracboundedev} we have $mt_1\preorderle n t_2$ for some $n\in\mathbb{N}$. This implies $\frac{mt_1}{t_2}\asymptoticle n$, therefore \ref{it:boundedev} is satisfied.

Using Lemma~\ref{lem:localizationasymptotic} and $\Delta(S,\preorderle)=\Delta(S,\asymptoticle)=\Delta(T^{-1}S,\asymptoticle)$ (again by Lemma~\ref{lem:spectrumofasymptoticpreorder} and Lemma~\ref{lem:spectrumlocalization}), for $x,y\in S$ we conclude 
\begin{equation}
x\asymptoticge y\iff\frac{x}{1}\asymptoticge\frac{y}{1}\iff\forall f\in\Delta(S,\preorderle):f(x)\ge f(y).
\end{equation}
\end{proof}

\begin{proof}[Proof of Proposition~\ref{prop:uniqueness}]
We prove existence. Define the map $h(x):S\to\mathbb{R}_{\ge 0}$ by $h(x)(s)=\Phi(s)(x)$. Then $h(x)\in\Delta(S,\asymptoticle)=\Delta(S,\preorderle)$, $\Phi(s)=\ev_s\circ h$ and the map $h$ is injective (since $\Phi(S)$ separates points) and continuous (because $\forall s\in S:\Phi(s)$ is continuous). Let $u$ be power universal and consider the set
\begin{equation}
A=\setbuild{a\frac{\ev_s}{\ev_u^{k+1}}-b\frac{\ev_t}{\ev_u^{k+1}}}{a,b\in\mathbb{R}_{\ge 0},k\in\mathbb{N},\exists n\in\mathbb{N}\exists s,t\in S,s\preorderle nu^k,t\preorderle nu^k}
\end{equation}
Then $A\subseteq C_0(\Delta(S,\asymptoticle))$ is a subalgebra that separates points (if $f_1(u)\neq f_2(u)$ then $\frac{1}{\ev_u}\in A$ separates them, otherwise if $f_1(s)\neq f_2(s)$ then $\frac{\ev_s}{\ev_u^{k+1}}\in A$ for some $k$ does) and vanishes nowhere (e.g. $\frac{1}{\ev_u}\in A$ is nowhere zero), so by the Stone--Weierstrass theorem for locally compact spaces \cite[Theorem A.10.2]{deitmar2009principles} it is dense in $C_0(\Delta(S,\asymptoticle))$.

Suppose that $h$ is not surjective and let $f_0\in\Delta(S,\preorderle)\setminus h(X)$. Let $0<\epsilon<f_0(u)$. By continuity, $(\ev_u)^{-1}((f_0(u)-\epsilon,f_0(u)+\epsilon))$ is an open set containing $f_0$. Since $\Phi(u)$ is proper, the subset $h(\Phi(u)^{-1}([f_0(u)-\epsilon,f_0(u)+\epsilon]))\subseteq\Delta(S,\preorderle)$ is compact in a Hausdorff space and therefore closed, and does not contain $f_0$. The set
\begin{equation}
U=(\ev_u)^{-1}((f_0(u)-\epsilon,f_0(u)+\epsilon))\setminus h(\Phi(u)^{-1}([f_0(u)-\epsilon,f_0(u)+\epsilon])).
\end{equation}
is open, disjoint from $h(X)$, and contains $f_0$. By Urysohn's lemma for locally compact Hausdorff spaces \cite[Lemma A.8.1]{deitmar2009principles}, there is a function $g\in C_0(\Delta(S,\preorderle))$ that is $1$ at $f_0$ and $0$ outside $U$, i.e. vanishes on $h(X)$.

By the density of $A$, there are exist $N\in\mathbb{N}$, $s,t\in S$, $k\in\mathbb{N}$ such that $s\preorderle nu^k,t\preorderle nu^k$ for some $n\in\mathbb{N}$ and
\begin{align}
\frac{1}{2N}\left(\frac{f(s)}{f(u)^{k+1}}-\frac{f(t)}{f(u)^{k+1}}\right)
 & < \frac{1}{4}\qquad\text{for all $f\in h(X)$}  \\
\frac{1}{2N}\left(\frac{f_0(s)}{f_0(u)^{k+1}}-\frac{f_0(t)}{f_0(u)^{k+1}}\right)
 & > \frac{3}{4}.
\end{align}
so
\begin{align}
f(s)-f(t+Nu^{k+1})
 & <-\frac{N}{2}f(u)^{k+1}\qquad\text{for all $f\in h(X)$}  \\
f_0(s)-f_0(t+Nu^{k+1})
 & >\frac{N}{2}f_0(u)^{k+1}
\end{align}
This means that $\Phi(t+Nu^{k+1})\ge\Phi(s)$, so by assumption $t+Nu^{k+1}\asymptoticge s$. On the other hand, $f_0(t+Nu^{k+1})<f_0(s)$, a contradiction.

We prove uniqueness. Let $h_1,h_2:X\to\Delta(S,\preorderle)$ be two homeomorphisms satisfying $\forall s\in S:\Phi(s)=\ev_s\circ h_1=\ev_s\circ h_2$. This means that for every $x\in X$ and $s\in S$ the equality $h_1(x)(s)=h_2(x)(s)$ holds. Since $S$ separates the points of $\Delta(S,\preorderle)$, this implies $h_1(x)=h_2(x)$ for all $x\in X$, i.e. $h_1=h_2$.
\end{proof}

\section*{Acknowledgement}

I thank Tobias Fritz for providing useful feedback. This work was supported by the ÚNKP-19-4 New National Excellence Program of the Ministry for Innovation and Technology and the Bolyai J\'anos Research Fellowship of the Hungarian Academy of Sciences. I acknowledge support from the Hungarian National Research, Development and Innovation Office (NKFIH) within the Quantum Technology National Excellence Program (Project Nr.~2017-1.2.1-NKP-2017-00001) and via the research grants K124152, KH129601.

\bibliography{refs}{}

\end{document}